\newcommand{\NN}{\mathbb{N}}
\newcommand{\QQ}{\mathbb{Q}}
\newcommand{\RR}{\mathbb{R}}
\DeclareMathOperator{\Diam}{Diam}
\newtheorem{verm}{Vermoeden}[section]
\newtheorem{Thm}[verm]{Theorem}
\newtheorem{defi}[verm]{Definition}
\newtheorem{lem}[verm]{Lemma}
\title[Full proof of Kwapie\'{n}'s theorem]{Full proof of Kwapie\'{n}'s theorem on representing bounded mean zero functions on $[0,1]$}
\author{A. F. Ber, M. J. Borst, F. A. Sukochev}
\begin{document}
	%\tableofcontents
	
	\date{\today}
	\begin{abstract}
		In \cite{Kwapien}, Kwapie\'{n} announced that every mean zero function $f\in L_\infty[0,1]$ can be written as a coboundary $f = g\circ T -g$ for some $g\in L_\infty[0,1]$ and some measure preserving transformation $T$ of $[0,1]$.
		Whereas the original proof in \cite{Kwapien} holds for continuous functions, there is a serious gap in the proof for functions with discontinuities.  In this article we fill in this gap and establish Kwapie\'{n}'s result in full generality. Our method also allows to improve the original result by showing that for any given $\epsilon>0$ the function $g$ can be chosen to satisfy a bound $\|g\|_\infty\leq (1+\epsilon)\|f\|_\infty$.
	\end{abstract}
	\maketitle

\section*{Introduction}
In this article we prove the following strengthening of theorem announced by Kwapie\'{n} in \cite{Kwapien}.
\begin{Thm}\label{Main result} Let $f\in L_\infty[0,1]$ be a real-valued mean zero function. Choose $\epsilon>0$, then there exists a $g\in L_\infty[0,1]$ with $\|g\|_\infty\leq (1+\epsilon)\|f\|_\infty$ and a measure preserving transformation mod0 $T$ of $[0,1]$ such that $f = g\circ T - g$.
\end{Thm}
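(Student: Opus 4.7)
The plan is to split the proof into two parts: a combinatorial construction handling simple (step) functions, followed by a careful approximation argument extending the result to general bounded mean zero functions.

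For the simple case, write $f = \sum_{i=1}^k c_i \chi_{A_i}$ with $\sum_i c_i |A_i| = 0$; after a measure-preserving rearrangement we may assume the sets $A_i$ are intervals. The goal is to construct $T$ so that along each orbit $x, Tx, T^2x, \ldots$ the partial sums $\sum_{j=0}^{n-1} f(T^j x)$ stay in a strip of width close to $\|f\|_\infty$. This reduces to a balancing problem: treat each $A_i$ as a reservoir of ``charges'' of size $c_i$ and total mass $|A_i|$, and arrange them into long cycles so that positive and negative charges alternate, according to a greedy rule (add a negative charge whenever the running sum is positive, and conversely). Since the total charge is zero, such cycles can be completed with only a small residual. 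The transformation $T$, together with the potential $g$ defined as the running sum along orbits, is then realized on $[0,1]$ via a Rokhlin-tower construction whose levels encode the chosen cyclic order and whose base has measure proportional to the inverse of the cycle length.

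For the general case, approximate $f$ in $L_\infty$ by a sequence of simple mean zero functions $f_n$ with $\|f_{n+1} - f_n\|_\infty$ decaying geometrically. Construct $T$ as a limit of transformations $T_n$ adapted to $f_n$ via a nested refinement of Rokhlin towers: the tower at stage $n+1$ subdivides the one at stage $n$ in such a way that $T_{n+1}$ agrees with $T_n$ outside an error set of small measure. A summability/diagonal argument then yields a measure-preserving limit $T$ and a potential $g = \lim g_n$ satisfying $f = g \circ T - g$, where the $(1+\epsilon)$ factor absorbs the cumulative errors from the approximation together with the small residuals left over in each cycle at the simple level.

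The principal obstacle, and the likely location of the gap in the original argument, is the approximation step. One must choose the approximants $f_n$, the heights of the towers, and the subdivision scheme so that simultaneously (i) the error regions on which $T_{n+1}$ disagrees with $T_n$ have summable measure, ensuring $T$ exists mod $0$; (ii) the potentials $g_n$ remain uniformly bounded throughout the limit, even though increasing the tower heights lengthens the partial-sum sequences; and (iii) the level sets of a general bounded measurable $f$---which may be geometrically very irregular---still admit a compatible nested tower structure. Balancing these three requirements against the sharp bound $\|g\|_\infty \leq (1+\epsilon)\|f\|_\infty$ is the heart of the technical difficulty.
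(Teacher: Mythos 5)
Your high-level skeleton (towers of nested partitions, a balancing/permutation lemma to keep orbit partial sums bounded, and a geometrically decaying $L_\infty$ approximation whose errors are absorbed into the $(1+\epsilon)$ factor) is indeed the same skeleton as the paper's Lemma \ref{Lemma:solving-equation-on-subset}. But the proposal stops exactly at the point where the real difficulty begins, and the step you defer is not a technicality: it is the gap in Kwapie\'{n}'s original argument that this paper exists to fill. Concretely, your plan needs a refining sequence of finite partitions of (a subset of) $[0,1]$ into cells of \emph{equal measure}, compatible under refinement, such that (a) the conditional expectations $f_n$ of $f$ onto these partitions converge to $f$ in $L_\infty$ (not just a.e.\ or in $L_1$), (b) $f_n$ has mean zero on every cell of the previous level (this is what makes the balancing lemma applicable to the increments $h_{k+1}=f_{n_{k+1}}-f_{n_k}$), and (c) the cells shrink in diameter so that the limit map $T$ is a well-defined injection. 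For a general bounded measurable $f$ none of this is automatic: $L_\infty$ convergence of conditional expectations forces the cells to be adapted to the level sets of $f$, while the equal-measure requirement is a rigid arithmetic constraint that generically conflicts with such adaptedness --- this is precisely the content of the counterexample in the introduction, where the natural $f$-adapted piece of a Lusin set has irrational relative measure and hence cannot be matched to a dyadic Cantor partition. The paper resolves this by (i) splitting off the part of $f$ with positive-measure level sets and treating it by explicit rotations (Theorem \ref{TheoremCountablyValued}), and (ii) on the remainder, using Lusin's theorem together with the rational-splitting Lemmas \ref{Lemma:splitting1} and \ref{Lemma:splitting2} to shrink each compact cell slightly so that the left/right split (which controls diameters) has a \emph{rational} measure ratio, after which an equal-measure subdivision compatible with both the geometry and the mean-zero condition becomes possible. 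Your proposal contains no substitute for this step.

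A second, smaller but genuine, problem: you allow $T_{n+1}$ to disagree with $T_n$ on an ``error set of small measure'' and hope a summability argument rescues the limit. The paper instead arranges \emph{exact} refinement ($T_{k+1}(I')\subseteq T_k(I)$ whenever $I'\subseteq I$), with all the shrinking pushed into the choice of the underlying set $C$ with $\lambda(K\setminus C)<\epsilon$, and then uses a Zorn's lemma exhaustion over disjoint pieces to recover full measure. With genuine disagreement sets, the cocycle identity $f=g\circ T-g$ need not survive the limit: the orbit partial sums defining $g$ can be corrupted on the (non-invariant) union of error sets, and you give no argument that they are not. So the proposal is a correct identification of the strategy and of where the difficulty lies, but it does not yet contain the ideas needed to close the argument.
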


The proof given in \cite{Kwapien} is incomplete for non-continuous functions. On the other hand, in the last 20 years, Kwapie\'{n}'s Theorem \ref{Main result} has been used in the theories of symmetric functionals (see e.g. \cite{FK}) and singular traces (see e.g. \cite{SingularTraces}) and featured in some measure theory treaties (see \cite[p.335. Exercise 9.12.68]{Bogachev2}). It is therefore important to obtain its full proof and this is the main objective of the present article. Our proof complements some deep ideas and interesting technical approaches from \cite{Kwapien}.

Now, we briefly explain the nature of the gap in \cite{Kwapien}. The original proof suggested in \cite{Kwapien} is based on the  usage of Lusin's theorem (see Theorem \ref{Prelim:lusin} below), which guarantees the existence of disjoint sets $A_n\subseteq [0,1]$ for $n\geq 1$ such that $\lambda([0,1]\setminus\bigcup_{n=1}^\infty A_n) = 0$ and so that the following holds:
\begin{enumerate}
	\item $A_n$ is a closed subset, homeomorphic to the Cantor set.
	\item $f$ restricted to $A_n$ is a continuous function.
	\item $\lambda(A_n)>0$ and $\int_{A_n}fd\lambda = 0$, where $\lambda$ is Lebesgue measure.
\end{enumerate}
It is further stated in \cite{Kwapien} that for each $n\geq 1$, there exists a homeomorphism from $A_n$ to the Cantor set $\{0,1\}^\NN$ that maps the measure $\frac{\lambda}{\lambda(A_n)}$ to the Cantor measure $\mu$ (the product measure $\mu = \prod_{i=1}^\infty \mu_i$ where $\mu_i$ is the probability measure on $\{0,1\}$ given by $\mu_i(\{0\})=
\mu_i(\{1\}) =\frac12$). 

We shall now present a concrete counter-example to this claim.

Fix an irrational scalar $\alpha\in (0,1)$ and set 
$f = (1-\alpha)\chi_{[0,\alpha)} -\alpha \chi_{(\alpha,1]}$, so $\int_{[0,1]}fd\lambda = 0$.
Now, let $A\subseteq [0,1]$ be  a set such that conditions (1), (2) and (3) are satisfied for this set.
Since $f|_{A}$ must be continuous and $A$ must be compact, it follows that either the set $(\alpha,\alpha+\epsilon)$ or else the set $(\alpha,\alpha - \epsilon)$ is disjoint with $A$ for some $\epsilon>0$.
Suppose, for definiteness, that $(\alpha,\alpha+\epsilon)\cap A=\emptyset$ for some $\epsilon>0$ (the argument for the other case is the same).
We set $C_1 = A\cap [0,\alpha]$ and $C_2 = A\cap [\alpha+\epsilon,1]$
so that we have
$(1-\alpha)\lambda(C_1) -\alpha\lambda(C_2) = \int_{A}fd\lambda = 0$.
This means that $\lambda(C_2) = \frac{1-\alpha}{\alpha}\lambda(C_1)$
and hence $\frac{\lambda(C_1)}{\lambda(A)} = \alpha$ is irrational.
Now, let $\varphi:A\to \{0,1\}^\NN$ be a homeomorpishm.
Since the $C_1$ is open and closed in $A$, it follows that 
$\varphi(C_1)$ is open and compact. 
Hence we can write  $\varphi(C_1) = \bigcup_{U\in \mathcal{A}}U$ for some subset $\mathcal{A}$ 
of the basis $\mathcal{B} = \{\{x\in \{0,1\}^\NN: (x_i)_{i=1}^N = 
(a_i)_{i=1}^N\}: (a_i)_{i=1}^N\in \{0,1\}^N 
\text{ for some } N\in \NN\}$ of the topology of 
$\{0,1\}^\NN$. Now, thanks to the compactness assumption, we can write 
$\varphi(C_1) = \bigcup_{i=1}^M U_i$ for some 
$M\in \NN$ and some open sets 
$U_i\in \mathcal{A}$ for $i=1,..,M$. Next, applying further subdivision, we obtain 
$\varphi(C_1) = \bigcup_{i=1}^{M'}B_i$ for some $M'\in\NN$ and some pairwise disjoint open sets $B_i\in \mathcal{B}$ for $i=1,..,M'$. Such a decomposition implies that 
$\mu(\varphi(C_1)) = \frac{j}{2^l}$ for some 
$j,l\in \NN$ so that $\varphi$ does not map $\frac{\lambda}{\lambda(A)}$ to $\mu$. This contradiction explains that Kwapie\'{n}'s construction does not hold in this particular case.

We give an outline of its structure.
In Section \ref{Section:Prelim}, we set notation and state  some known results that will be used throughout this article.
In Section \ref{Section:ProofNowhereConstantFunctions}, we treat some special class of measurable bounded functions $f$ and  adapt the proof in \cite{Kwapien} to this class. More precisely, we request that for a bounded mean zero function $f$, there exists a 
$\kappa\in \RR$ so that $\lambda(f^{-1}(\{y\}))  = 0$ for all $y<\kappa$ and such that $f|_{\{f\geq \kappa\}} = \kappa$. In that section, we present a modified method of constructing the sets $A_n$ used in \cite{Kwapien} which allows us to treat this subclass.  Next, in Section \ref{Section:ProofCountablyValuedFunctions},
we prove Kwapie\'{n}'s Theorem for countably valued, mean zero functions $f\in L_\infty[0,1]$. Finally, in Section \ref{Section:ProofFullTheorem}, we combine the results from Section \ref{Section:ProofNowhereConstantFunctions} and \ref{Section:ProofCountablyValuedFunctions} and obtain the complete proof of Kwapie\'{n}'s Theorem \ref{Main result}.\\

We note that in \cite{AdamsPublished} (see also \cite{Adams2}), a special case of Kwapie\'{n}'s Theorem is established by using a completely different approach and for selected class of measure preserving transformation.

\section{Preliminaries}
\label{Section:Prelim}
In this section we set notation and state known results that we need. Throughout this article we will equip a Lebesgue measurable  set $K\subset [0,1]$ with the Borel $\sigma$-algebra $\mathcal{B}(K)$ and the (induced) Lebesgue measure $\lambda$, unless otherwise stated. We then let $L_\infty(K)$ denote the space of essentially bounded real-valued functions under the equivalence relation of equality almost everywhere. We denote the essential supremum by $\|\cdot\|_\infty$. \\

We will use the following notion of a measure preserving transformation.
\begin{defi}
	Let $(\Omega,\mathcal{A},\mu)$ and $(\Omega',\mathcal{A}',\mu')$ be measure spaces. We define a measure preserving transformation mod0 between the measure spaces as a bijection $T:\Omega\setminus N\to \Omega'\setminus N'$ for some null sets $N\in \mathcal{A}$ and $N'\in \mathcal{A}'$, so that both $T$ and $T^{-1}$ are measurable mappings and $\mu'(T(A)) = \mu(A)$ for all $A\subseteq \Omega\setminus N$ in $\mathcal{A}$.
\end{defi}

The following result is obtained by a combination of Theorem 7.4.3 and Proposition 1.4.1 in \cite{MT:book1}

\begin{Thm}[Lusin's theorem]\label{Prelim:lusin}
	Let $D\subseteq [0,1]$ be Borel-measurable and let $f: D \to \RR$ be Borel-measurable. If $\epsilon>0$, then there is a compact subset $K\subseteq A$ such that $\mu(A \setminus K) < \epsilon$ and such that the restriction of $f$ to $K$ is continuous.		
\end{Thm}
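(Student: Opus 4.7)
The plan is to follow the classical three-step reduction: truncate to reduce to the bounded case, uniformly approximate a bounded Borel function by simple (finitely-valued) functions, and handle simple functions directly via inner regularity of Lebesgue measure. The whole argument then assembles from standard measure-theoretic bookkeeping plus one genuinely analytic input.

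First I would reduce to bounded $f$. The sets $D_n := \{x\in D: |f(x)|\leq n\}$ are Borel and increase to $D$, so by continuity from below there is an $N$ with $\lambda(D\setminus D_N)<\epsilon/3$. Replacing $D$ by $D_N$ it suffices to find a compact $K\subseteq D_N$ with $\lambda(D_N\setminus K)<2\epsilon/3$ on which $f$ is continuous. Next, for the simple-function case: if $s=\sum_{i=1}^m c_i\chi_{B_i}$ with the $B_i$ pairwise disjoint Borel subsets of $D_N$, inner regularity of Lebesgue measure on $[0,1]$ (the core analytic input) provides compact sets $K_i\subseteq B_i$ with $\lambda(B_i\setminus K_i)$ arbitrarily small. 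Since finitely many pairwise disjoint compact subsets of $[0,1]$ are mutually separated, each $K_i$ is clopen in $K':=\bigcup_i K_i$, so $s|_{K'}$ is locally constant and therefore continuous.

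To handle general bounded Borel $f$ on $D_N$, pick simple functions $s_n$ with $\|f-s_n\|_\infty<1/n$ (for instance, the dyadic approximations of $f$). Applying the simple-function step with tolerance $\epsilon/(3\cdot 2^n)$ produces compact $K_n\subseteq D_N$ with $s_n|_{K_n}$ continuous and $\lambda(D_N\setminus K_n)<\epsilon/(3\cdot 2^n)$. Set
\[
K := \bigcap_{n=1}^\infty K_n.
\]
Then $K$ is compact, $\lambda(D_N\setminus K)<\epsilon/3$, each $s_n$ is continuous on $K$, and $s_n\to f$ uniformly on $K$, so $f|_K$ is continuous. Combining with the truncation step gives $\lambda(D\setminus K)<\epsilon$, as required.

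The main obstacle (indeed the only nontrivial ingredient) is the inner regularity of Lebesgue measure on $[0,1]$: every Borel set is approximable from within by compact subsets. This is where one uses that $\lambda$ is a Radon measure on the Polish space $[0,1]$; once granted, everything else is just careful choice of tolerances and the elementary observation that a finite disjoint union of compact sets is clopen-decomposed. The only subtlety to double-check is that finiteness of the simple-function step is genuinely needed — a countable disjoint union of compact sets in $[0,1]$ need not be compact, which is why the approximation is done by finitely-valued $s_n$ and the countable intersection is pushed outside.
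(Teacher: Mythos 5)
Your proof is correct: the reduction to bounded $f$ by truncation, the treatment of simple functions via inner regularity of $\lambda$ on Borel subsets of $[0,1]$ together with the observation that finitely many pairwise disjoint compact sets are mutually separated, and the final countable intersection $K=\bigcap_n K_n$ on which the uniformly convergent sequence $s_n|_K$ forces continuity of $f|_K$, is exactly the standard argument, and your closing remark about why the disjoint-union step must stay finite correctly identifies the one place a careless version would break. The paper itself offers no proof of this statement --- it is listed as a preliminary and attributed to a combination of Theorem 7.4.3 and Proposition 1.4.1 of Cohn's \emph{Measure theory} --- so there is nothing to compare against beyond noting that your argument is the textbook proof the cited reference contains.
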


The following theorem is obtained by combining Theorems 9.3.4 and 9.5.1 in \cite{Bogachev}
\begin{Thm}\label{Prelim:isomorphism}
	Let $A,B\subseteq [0,1]$ be some subsets of equal positive measure, then there exists a measure preserving transformation mod0 $T$ between $A$ and $B$.
\end{Thm}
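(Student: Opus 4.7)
The plan is to reduce to showing that any Borel set $C\subseteq[0,1]$ of positive Lebesgue measure is measure-preservingly isomorphic mod0 to the interval $[0,\lambda(C)]$ equipped with Lebesgue measure; applying this to both $A$ and $B$ (whose target intervals coincide by the hypothesis $\lambda(A)=\lambda(B)$) and composing the two isomorphisms then produces the required transformation between $A$ and $B$.

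For this reduction, I would use a nested dyadic partition argument. For each $n\geq 0$, construct a partition of $C$ into $2^n$ Borel pieces $C_k^{(n)}$ of equal measure $\lambda(C)/2^n$ together with the standard partition of $[0,\lambda(C)]$ into the $2^n$ equal subintervals $I_k^{(n)}=\bigl((k-1)\lambda(C)/2^n,\,k\lambda(C)/2^n\bigr]$, arranged so that the level-$(n+1)$ partition refines the level-$n$ one. To realise the pieces $C_k^{(n)}$, observe that $x\mapsto \lambda(C\cap[0,x])$ is continuous on $[0,1]$ because Lebesgue measure is atomless, so by the intermediate value theorem one can pick cut-points $0=y_0^{(n)}<\cdots<y_{2^n}^{(n)}=1$ with $\lambda(C\cap[0,y_k^{(n)}])=k\lambda(C)/2^n$ and set $C_k^{(n)}=C\cap(y_{k-1}^{(n)},y_k^{(n)}]$; refinement is secured by forcing the cut-points at level $n+1$ to include those at level $n$.

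Each $x\in C$ then lies in a unique cell $C_{k_n(x)}^{(n)}$ at level $n$, and the nested intervals $I_{k_n(x)}^{(n)}$ shrink to a single point $\phi(x)\in[0,\lambda(C)]$, defining a measurable map $\phi:C\to[0,\lambda(C)]$. By construction $\phi$ sends the measure of each $C_k^{(n)}$ onto that of $I_k^{(n)}$, and since these cells generate the Borel $\sigma$-algebras on either side, $\phi$ is measure-preserving on the full Borel structure.

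The main obstacle is to upgrade $\phi$ to a genuine bimeasurable bijection after discarding a null set. Injectivity can fail only on the set $E\subseteq C$ of points $x$ whose nested cells $C_{k_n(x)}^{(n)}$ do not shrink in diameter to zero; a standard Lebesgue density-point argument shows that almost every $x\in C$ is a density point of $C$, which forces the relevant cell widths $y_{k_n(x)}^{(n)}-y_{k_n(x)-1}^{(n)}$ to tend to zero, so $\lambda(E)=0$. Similarly, the set of $y\in[0,\lambda(C)]$ whose preimage under $\phi$ is empty or multivalued reduces to dyadic boundary ambiguities, which form a countable (hence null) set. Deleting these exceptional null sets from $C$ and $[0,\lambda(C)]$ and inverting the construction on the target side via its own dyadic address map yields the desired measure-preserving bijection mod0, and composing the isomorphisms for $A$ and $B$ completes the proof.
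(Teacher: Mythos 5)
The paper offers no proof of this statement at all: it is imported as a known result, obtained by combining Theorems 9.3.4 and 9.5.1 of Bogachev's \emph{Measure theory} (the isomorphism theorem for separable atomless measure spaces). Your proposal is therefore a genuinely different route --- a self-contained, elementary construction of exactly the special case the paper needs. Most of it is sound: the reduction to showing $C\cong[0,\lambda(C)]$ mod0, the dyadic cut-point construction via the continuity of $x\mapsto\lambda(C\cap[0,x])$, the verification that $\phi$ pulls the measure of $I_k^{(n)}$ back to that of $C_k^{(n)}$ (and hence is measure-preserving on the generated $\sigma$-algebra), and the density-point argument for almost-everywhere injectivity are all correct. (A cosmetic point: $A$ and $B$ are merely Lebesgue measurable, so you should first pass to Borel kernels of full measure.)

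One step is misstated, though repairable. You claim that the set of $y\in[0,\lambda(C)]$ with empty preimage ``reduces to dyadic boundary ambiguities'' and is therefore countable. That is not true: for a non-dyadic $y$ the associated nested cells $C_{k_n(y)}^{(n)}$ are half-open sets of positive measure whose measures tend to $0$, and such a decreasing sequence can have empty intersection, so the complement of $\phi(C)$ can be uncountable. The correct argument is measure-theoretic rather than set-theoretic: if $[0,\lambda(C)]\setminus\phi(C)$ had positive outer measure it would contain a compact set $F$ with $\lambda(F)>0$, yet $\phi^{-1}(F)=\emptyset$, contradicting $\lambda(\phi^{-1}(F))=\lambda(F)$; measurability of the image (and of $\phi^{-1}$ on it) follows from the Lusin--Souslin theorem applied to the restriction of $\phi$ to the conull set where it is injective, or can be extracted by hand from the fact that $\phi$ maps cells into the closures of the corresponding dyadic intervals. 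With that repair, and the trivial observation that the dyadic address map on the target side is the identity, the construction does yield the required isomorphism mod0, and composing the two isomorphisms for $A$ and $B$ finishes the proof.
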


%We will use standard Zorn's Lemma:
%\begin{lem}[Zorn's Lemma]\label{Prelim:zorn}
%	If each chain in a partially ordered set has an upper bound, then the set contains a maximal element.
%\end{lem}

We shall also need the following two lemmas, which are taken from \cite[Lemma]{Kwapien} and \cite[Lemma 5.2.3]{SingularTraces} respectively. For convenience of the reader, we give a short proof of the second lemma.
\begin{lem}\label{Prelim:permutations-kwapien}
	Let $(a_{i,j})_{n \times m}$ be a matrix of real numbers such that $|a_{i,j}|\leq C$ for $i=1,...,n$ and $j=1,...,m$ and such that $\sum_{j=1}^{m}a_{i,j} = 0$ for $i=1,...,n$. Then there exists permutations $\sigma_1,....,\sigma_n$ of $\{1,...,m\}$ such that 
	\[\left|\sum_{i=1}^{k}a_{i,\sigma_i(j)}\right|\leq 2C \text{ for all } k=1,...,n \text{ and } j=1,....,m.\]
\end{lem}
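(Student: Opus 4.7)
My plan is to prove the lemma by induction on the number of rows $n$, using Hall's marriage theorem in the inductive step.

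For the base case $n = 1$, take $\sigma_1 = \mathrm{id}$; the partial sum at each column $j$ is then $a_{1, j}$, which satisfies $|a_{1, j}| \le C \le 2C$.

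For the inductive step, suppose the lemma holds for matrices with fewer than $n$ rows, and consider an $n \times m$ matrix satisfying the hypotheses. Apply the inductive hypothesis to the first $n-1$ rows to obtain permutations $\sigma_1, \ldots, \sigma_{n-1}$ such that the partial column sums $s_j := \sum_{i=1}^{n-1} a_{i, \sigma_i(j)}$ satisfy $|s_j| \le 2C$ for every $j$ (and automatically $\sum_j s_j = 0$). It remains to find $\sigma_n$ with $|s_j + a_{n, \sigma_n(j)}| \le 2C$ for all $j$. Form the bipartite graph $G$ on two copies of $\{1, \ldots, m\}$ with an edge $(j, j')$ precisely when $|s_j + a_{n, j'}| \le 2C$; a perfect matching in $G$ yields $\sigma_n$. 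By Hall's marriage theorem, such a matching exists as soon as $|N_G(J)| \ge |J|$ for every $J \subseteq \{1, \ldots, m\}$.

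To verify Hall's condition, I would argue by contradiction: if $|N_G(J)| < |J|$ then the complement $B := \{1, \ldots, m\} \setminus N_G(J)$ has $|B| > m - |J|$, and every pair $(j, j') \in J \times B$ has $s_j + a_{n, j'} > 2C$ or $s_j + a_{n, j'} < -2C$. Partition $J$ by the sign of $s_j$ and $B$ by the sign of $a_{n, j'}$; the four resulting subcases yield sign-coupled linear inequalities that, combined with the zero-sum identities $\sum_j s_j = 0 = \sum_{j'} a_{n, j'}$ and the individual bounds $|s_j| \le 2C$, $|a_{n, j'}| \le C$, should force $|J| + |B| \le m$, contradicting $|B| > m - |J|$. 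I expect the main obstacle to be precisely this Hall-condition verification: the constant $2C$ is tight, so the arithmetic has essentially no slack and all four structural constraints must be combined with care. A related subtlety is that an arbitrary choice of $\sigma_1, \ldots, \sigma_{n-1}$ from the inductive hypothesis might leave the $s_j$'s in a configuration that admits no valid $\sigma_n$; in that case, the inductive hypothesis should be strengthened to produce $s_j$'s with additional structure (for instance, by selecting the $\sigma_i$'s via a specific sorted-pairing rule that keeps the partial sums balanced), so that the Hall condition on the induced bipartite graph holds automatically.
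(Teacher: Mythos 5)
The paper does not actually prove this lemma --- it is quoted verbatim from Kwapie\'{n}'s original article \cite{Kwapien} and used as a black box --- so there is no in-paper proof to compare against; your proposal must therefore stand on its own, and it does not. The gap is in the inductive step, and it is not merely a technical difficulty with the Hall-condition arithmetic: the step as you set it up is false. Take $C=1$, $m=4$, $n=3$, with rows $(1,1,-1,-1)$, $(1,1,-1,-1)$, $(-1,\tfrac13,\tfrac13,\tfrac13)$. The inductive hypothesis applied to the first two rows may legitimately return $\sigma_1=\sigma_2=\mathrm{id}$, since the partial sums are $(1,1,-1,-1)$ and then $(2,2,-2,-2)$, all bounded by $2C$. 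But now every column $j$ with $s_j=2$ needs to receive a nonpositive entry of the third row, and that row has only one nonpositive entry while there are two such columns. In Hall's language, $J=\{j: s_j=2\}$ has $|J|=2$ but $|N_G(J)|=1$, so no $\sigma_3$ exists. Consequently the conditions $\sum_j s_j=0$, $|s_j|\leq 2C$, $\sum_{j'}a_{n,j'}=0$, $|a_{n,j'}|\leq C$ do \emph{not} force the Hall condition, and the contradiction you hope to extract in your four-subcase sign analysis cannot be obtained --- the claimed inequality $|J|+|B|\leq m$ simply fails here.

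You anticipate this danger in your last sentence and propose to strengthen the inductive hypothesis, but that is precisely where the entire content of the lemma lies, and no candidate strengthening is supplied or verified. Note that the two natural candidates both fail: the ``oppositely sorted pairing'' applied to $s=(2,2,-2,-2)$ and $a=(-1,\tfrac13,\tfrac13,\tfrac13)$ produces the sum $2+\tfrac13>2C$ (indeed no pairing at all works for that configuration, so the defect must be repaired by choosing $\sigma_1,\sigma_2$ differently, e.g.\ so that the partial sums after two rows are $(0,0,0,0)$); and the stronger invariant $|s_j|\leq C$ is not maintainable either (for $s=(C,C,C,-C,-C,-C)$ and $a=(-C,\tfrac{C}{5},\tfrac{C}{5},\tfrac{C}{5},\tfrac{C}{5},\tfrac{C}{5})$ some thread must rise to $\tfrac{6C}{5}$). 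So the earlier permutations must be chosen with some foresight about, or some invariant robust against, the later rows; identifying and propagating that invariant is the actual work of Kwapie\'{n}'s proof, and it is absent from your argument. As written, the proposal is a plausible plan with the decisive step missing.
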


\begin{lem}
	\label{sum}   Let
	$a_1,\dots,a_n\in\mathbb{R}$ with $\sum_{k=1}^n a_k=0$. Then
	there exists a rearrangement $\sigma$ of the indices $1,\dots,n$, so that
	$|\sum_{k=1}^m  a_{\sigma(k)}|\leq \max_{k=1}^n|a_k|$ for every
	$m\in\{1,\dots,n\}$.
\end{lem}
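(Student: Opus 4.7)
My plan is to build $\sigma$ by a greedy procedure and verify the bound inductively. Set $C := \max_{1\le k\le n}|a_k|$ and $S_0 := 0$, and define $S_m := \sum_{k=1}^m a_{\sigma(k)}$ once $\sigma(1),\dots,\sigma(m)$ are chosen. At step $m+1$, from the indices not yet used I pick $\sigma(m+1)$ according to the sign of $S_m$: if $S_m \ge 0$, I choose some remaining $k$ with $a_k \le 0$; if $S_m < 0$, I choose some remaining $k$ with $a_k > 0$. The guiding intuition is that the partial sum is pushed back toward $0$ at every step, so it cannot drift outside $[-C,C]$.

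The inductive step that $|S_{m+1}|\le C$ is essentially immediate from the choice rule. In the case $S_m\ge 0$, since $-C\le a_{\sigma(m+1)}\le 0$, one has
\[
S_m - C \;\le\; S_{m+1} \;=\; S_m + a_{\sigma(m+1)} \;\le\; S_m \;\le\; C,
\]
so $|S_{m+1}|\le C$ (using the inductive hypothesis $S_m\le C$ for the upper bound and $-C\le 0\le S_m$ together with $a_{\sigma(m+1)}\ge -C$ for the lower bound). The case $S_m<0$ is symmetric.

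The only delicate point --- and what I expect to be the main obstacle --- is to show that the greedy step is always feasible, i.e. that a suitable remaining index exists whenever $m<n$. Suppose $S_m\ge 0$ but every remaining index $k$ has $a_k>0$. Then $\sum_{k\ \text{remaining}} a_k>0$; but this sum equals $-S_m\le 0$ by the hypothesis $\sum_{k=1}^n a_k=0$, a contradiction (unless the remaining set is empty, which only happens at $m=n$). The symmetric argument settles the case $S_m<0$. This resolves the feasibility issue and completes the construction of $\sigma$ satisfying the required bound.
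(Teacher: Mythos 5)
Your greedy construction — at each step appending a remaining term whose sign opposes the current partial sum, with feasibility guaranteed by $\sum_k a_k=0$ and the bound $|S_{m+1}|\le C$ following from the sign opposition — is correct and is essentially the same inductive argument as the paper's proof. The only difference is cosmetic: you spell out the feasibility and the two-sided estimate slightly more explicitly (including the handling of zero terms) than the paper does.
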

\begin{proof}[Proof of Lemma \ref{sum}]
	Set $\sigma(1)=1$ and assume that the sought for rearrangement is already defined for $1,\dots,m$, so that $|\sum_{k=1}^m a_{\sigma(k)}|\leq
	\max_{k=1}^n|a_k|$. Since $\sum_{k=1}^n a_k=0$, it follows that among
	the remaining indices (excepting $\sigma(1),\dots,\sigma(m)$) there exists
	an index $j$, such that $\sum_{k=1}^m  a_{\sigma(k)}$ and $a_j$
	have opposite signs. Set $\sigma(m+1)=j$. Then, it follows from the obvious implication $a\leq 0\leq b \Rightarrow a\leq a+b \leq b$ that
	$-\max_{k=1}^n|a_k|\leq \min(\sum_{k=1}^m
	a_{\sigma(k)},a_{\sigma(m+1)}) \leq \sum_{k=1}^{m+1}
	a_{\sigma(k)}\leq \max(\sum_{k=1}^m a_{\sigma(k)},a_{\sigma(m+1)})
	\leq \max_{k=1}^n|a_k|$.
\end{proof}

\section{Theorem \ref{Main result} for almost nowhere constant functions}
\label{Section:ProofNowhereConstantFunctions}
 
In this section we shall always assume 

\begin{enumerate}\label{conditions}
\item[(i)] that $K\subseteq [0,1]$ is a Lebesgue measurable set   with $\lambda(K)>0$ and consider the measure space $(K, \lambda)$ equipped with Lebesgue measure; 

\item[(ii)] that  $f\in L_\infty(K)$ is a real valued mean zero function such that there exists a constant $\kappa \in \RR$ with $\lambda(f^{-1}(\{y\})) = 0$ for $y< \kappa$ and such that $f|_{\{f\geq \kappa\}} = \kappa$. 

\end{enumerate}

In this section, we will prove the following special case of Theorem \ref{Main result}.
\begin{Thm}\label{TheoremNowhereConstant}
Suppose that $K$ and $f$ satisfy assumptions (i) and (ii) above.
%Let $f\in L_\infty(K)$ be a real valued mean zero function such that there exists a constant $\kappa \in \RR$ with $\lambda(f^{-1}(\{y\})) = 0$ for $y< \kappa$ and such that $f|_{\{f\geq \kappa\}} = \kappa$. 
For every $\epsilon>0$, there is a $g\in L_\infty(K)$ with 
$\|g\|_\infty\leq (1 + \epsilon)\|f\|_\infty$ 
and a measure preserving transformation $T$  mod0 of 
$(K, \lambda)$ such that $f = g\circ T - g$.
\end{Thm}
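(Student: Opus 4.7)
The plan is to imitate Kwapień's original combinatorial construction while replacing the faulty Cantor-set step by a direct use of hypothesis (ii). The strategy is to approximate $f$ by a simple function that is constant on the cells of a finite ``rectangular'' partition of $K$, use Lemma \ref{Prelim:permutations-kwapien} to manufacture column permutations whose partial sums are uniformly bounded, and then read off a measure preserving transformation $T$ and a step function $g$ from this data, finally iterating to upgrade the bound to $(1+\epsilon)\|f\|_\infty$.

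First I would apply Lusin's theorem (Theorem \ref{Prelim:lusin}) together with a standard exhaustion to decompose $K$, up to a null set, into countably many disjoint compact pieces $A_n$ on which $f|_{A_n}$ is continuous and $\int_{A_n}f\,d\lambda=0$. Hypothesis (ii) is essential here and in what follows: since $\lambda(f^{-1}(\{y\}))=0$ for every $y<\kappa$, the map $t\mapsto\lambda(\{f\leq t\}\cap B)$ is continuous for each measurable $B$ with $\lambda(\{f<\kappa\}\cap B)>0$, so each $A_n$ can be subdivided into compact subsets of \emph{any} prescribed positive measure on which the integral of $f$ is controlled and the oscillation of $f$ is as small as one wishes. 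This is exactly the flexibility that the Cantor-measure isomorphism was supposed to produce, and whose failure is exhibited by the counterexample from the introduction. Fixing $N$ depending on $\epsilon$, on each $A_n$ I would construct a grid $\{A_n^{i,j}\}_{1\leq i\leq N,\,1\leq j\leq M_n}$ such that each column $\bigcup_i A_n^{i,j}$ has equal measure, the oscillation of $f$ on each cell is at most a prescribed small $\eta$, and each row is mean-zero for $f$. Writing $a_n^{i,j}$ for the (approximate) value of $f$ on $A_n^{i,j}$, Lemma \ref{Prelim:permutations-kwapien} yields permutations $\sigma_n^i$ of $\{1,\dots,M_n\}$ with $\bigl|\sum_{\ell\leq k}a_n^{\ell,\sigma_n^\ell(j)}\bigr|\leq 2\|f\|_\infty$. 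Using Theorem \ref{Prelim:isomorphism} I would define $T$ to send $A_n^{i,\sigma_n^i(j)}$ onto $A_n^{i+1,\sigma_n^{i+1}(j)}$ cyclically in $i$ (with the last row wrapping back to the first), and let $g$ be the corresponding partial-sum step function.

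The main obstacle is upgrading the direct bound $2\|f\|_\infty+O(\eta)$ coming from Lemma \ref{Prelim:permutations-kwapien} to the sharp $(1+\epsilon)\|f\|_\infty$ while also capturing $f$ itself, not merely its simple approximation, as the coboundary. I expect this to be handled by iteration: after producing a first pair $(T,g^{(1)})$ with residual $r_1:=f-(g^{(1)}\circ T-g^{(1)})$ of small $L_\infty$-norm, one applies the same construction to $r_1$ on a partition that \emph{refines} the first so that the very same $T$ continues to serve, and sums the resulting cochains $g^{(k)}$ as a rapidly convergent geometric series. Hypothesis (ii) is invoked a second time to permit these compatible refinements inside existing cells, and Lemma \ref{sum} is likely to play a role in sharpening the per-column estimate from $2\|f\|_\infty$ toward $\|f\|_\infty$. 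The most delicate point, I anticipate, is arranging such a refinement simultaneously across all pieces $A_n$ so that a single transformation $T$ of $K$ is valid at every stage of the iteration and the geometric series defining $g$ converges in $L_\infty$ to a function satisfying the required bound.
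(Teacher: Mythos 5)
Your proposal follows essentially the same route as the paper: a Lusin/exhaustion decomposition into compact mean-zero pieces, a tower of equal-measure cells with shrinking oscillation built from hypothesis (ii), Lemma \ref{Prelim:permutations-kwapien} and Lemma \ref{sum} to define cyclic transformations with bounded partial sums, and a telescoping refinement $f=f_{n_0}+\sum_k h_k$ whose cochains sum geometrically to give $\|g\|_\infty\leq(1+\epsilon)\|f\|_\infty$. The delicate points you flag (compatible refinements yielding a single limiting $T$, and oscillation control via subdivision of prescribed measure) are exactly what the paper's Lemmas \ref{Lemma:shrinking}--\ref{Lemma:contruction-of-chains} resolve, via a rational left/right splitting that keeps the cells intervals-intersected-with-$K$ so their diameters tend to zero.
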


We prove Theorem \ref{TheoremNowhereConstant} 
by adapting and adjusting the method suggested 
by Kwapie\'{n} in \cite{Kwapien}. We modify the construction of sets $A_n, n\ge 1$ used in  \cite{Kwapien} in the proof for continuous functions to make them suitable for functions $f$ as above.

% However to do our construction we had to add the extra assumption that there exists a $\kappa\in \RR$ such that $\lambda(f^{-1}(\{y\})) = 0$ for all $y<\kappa$ and $f|_{\{f\geq \kappa\}} = \kappa$. Our method can not be applied to general mean zero functions. For example, for a mean zero function $f$ of the form $f =(1-\alpha)\chi_{[0,\alpha]} + \alpha\chi_{[\alpha,1]}$ with $\alpha$ irrational, the method of proof cannot be applied.	
\subsection{Preserving mean zero condition for subsets}

For the proof of Theorem \ref{TheoremNowhereConstant} we will need the following result which we briefly explain below. Suppose that we are given a measurable set $D$ and an essentially bounded, non-zero function $f$ supported on $D$, whose integral over $D$ equals $0$. Then, for sufficiently small $\epsilon>0$ (depending on $f$), and any compact subset $E\subseteq D$ with $\lambda(D\setminus E)<\epsilon$ and such that $f|_E$ is continuous, there exists a slightly smaller compact subset $K\subseteq E$ such that $\int_{K}fd\lambda = 0$.
\begin{lem}\label{Lemma:compensation-of-integral}
	Let $D\subseteq [0,1]$ and let $f\in L_\infty(D)$ be a real-valued function satisfying $f\neq 0$ and  $\int_{D}fd\lambda =0$. Then $\tau^+ = \lambda(\{f>\frac{1}{2}||f^+||_\infty\})>0$ and $\tau^- = \lambda(\{f<-\frac{1}{2}||f^-||_\infty\})>0$. Let $$0<\epsilon < \frac{1}{4}\min\{\tau^+\frac{||f^+||_\infty}{||f||_\infty},\tau^-\frac{||f^-||_\infty}{||f||_\infty}\}.$$ Then, for any compact subset $E\subset D$ with $\lambda(E)\geq \lambda(D) - \epsilon$ and for which $f|_E$ is continuous, there is a compact subset $K\subseteq E$ with 
	$\int_{K}fd\lambda = 0$ and $\lambda(K) \geq \lambda(D) -\left(1 + 2||f||_\infty\max\{\frac{1}{||f^+||_\infty},\frac{1}{||f^-||_\infty}\}\right)\epsilon$.
	\begin{proof}
		Let $D,E$ and $f$ and $\epsilon$ be such that all conditions above are fulfilled.
		Further, set $\tilde{\epsilon} = \int_{E}fd\lambda$. Since, $f\not=0$ and $\int_{D}fd\lambda = 0$ we have that $f^+,f^-\not=0$, hence $\tau^+,\tau^->0$. We have $$|\tilde{\epsilon}| = |\int_{E}fd\lambda| =|\int_{D\setminus E}fd\lambda| \leq \lambda(D\setminus E)||f||_\infty \leq \epsilon||f||_\infty.$$ We will further suppose that $\tilde{\epsilon}\geq 0$. The set $f|_E^{-1}((\frac{1}{2}||f^+||_\infty,\infty))$ is open in $E$ since $f|_E$ is continuous. Now, for $0\leq r\leq 1$ define the 
		measurable set, $R_r :=f|_E^{-1}((\frac{1}{2}||f^+||_\infty,\infty))\cap [0,r)\subseteq E$ and let $F:[0,1]\to \RR$ be given by
		$F(r) = \int_{E\setminus R_r}fd\lambda$. Since $|F(r_1)-F(r_2)|\leq \|f\|_\infty|r_1-r_2|$, it follows that $F$ is a continuous function. Further we have
		$F(0) = \tilde{\epsilon}\geq0$ by assumption. Furthermore, since 
		$$\lambda(R_1)=\lambda(f^{-1}((\frac{1}{2}||f^+||_\infty,\infty))\cap E) \geq \tau^+ - \lambda(D\setminus E) \geq \tau^+ - \epsilon\geq \frac{1}{2}\tau^+$$ it follows that $$F(1) = \int_{E}fd\lambda - \int_{R_1}fd\lambda \leq \tilde{\epsilon} - \lambda(R_1)\frac{1}{2}||f^+||_\infty \leq \tilde{\epsilon} - \tau^+\frac{1}{4}||f^+||_\infty\leq(\epsilon - \frac{1}{4}\tau^+\frac{||f^+||_\infty}{||f||_\infty})||f||_\infty<0.$$
We conclude that there exists $r_0\in [0,1]$ with $F(r_0) = 0$. Now set $K = E\setminus R_{r_0}\subseteq D$. Since $R_{r_0}$ is open in $E$, it follows that $K$ is compact. Further, we have $\int_{K}fd\lambda = F(r_0) = 0$. Finally, we have
		$$\tilde{\epsilon} = \int_{E}fd\lambda = \int_{R_{r_0}}fd\lambda \geq \lambda(R_{r_0})\frac{1}{2}||f^+||_\infty,$$ which immediately yields $$\lambda(R_{r_0})\leq \frac{2\tilde{\epsilon}}{||f^+||_\infty} \leq \frac{2||f||_\infty}{||f^+||_\infty}\epsilon.$$
		
Hence,

\begin{align*}
\lambda(K)& = \lambda(E) - \lambda(R_{r_0}) \geq (\lambda(D) -\epsilon) - \frac{2||f||_\infty}{||f^+||_\infty}\epsilon 
 = \lambda(D) - (1+\frac{2||f||_\infty}{||f^+||_\infty})\epsilon \cr &
 \geq \lambda(D) - \left(1 + 2||f||_\infty\max\{\frac{1}{||f^+||_\infty},\frac{1}{||f^-||_\infty}\}\right)\epsilon.
\end{align*}  
The case that $\tilde{\epsilon}<0$ now follows by replacing $f$ by $-f$. This completes the proof.
	\end{proof}
\end{lem}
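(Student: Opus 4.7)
The plan is to exploit an intermediate value argument to trim a carefully chosen open piece out of $E$. First I would record why $\tau^+$ and $\tau^-$ are positive: since $f$ is not identically zero but has zero mean, both $f^+$ and $f^-$ are nontrivial, and the definition of the essential supremum then forces $\lambda(\{f>\tfrac{1}{2}\|f^+\|_\infty\})>0$, and analogously $\tau^->0$.

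Next, set $\tilde{\epsilon}:=\int_E f\,d\lambda=-\int_{D\setminus E}f\,d\lambda$, so that $|\tilde{\epsilon}|\leq \epsilon\|f\|_\infty$ by the measure hypothesis on $E$. After replacing $f$ by $-f$ if necessary, I may assume $\tilde{\epsilon}\geq 0$; the task is then to remove an open subset of $E$ carrying positive integral exactly $\tilde{\epsilon}$, leaving a compact residue.

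To perform the trimming continuously, for $r\in[0,1]$ I would let $R_r:=f|_E^{-1}((\tfrac{1}{2}\|f^+\|_\infty,\infty))\cap[0,r)$, which is open in $E$ because $f|_E$ is continuous. Define $F(r):=\int_{E\setminus R_r}f\,d\lambda$; continuity of $F$ follows from the Lipschitz estimate $|F(r_1)-F(r_2)|\leq\|f\|_\infty|r_1-r_2|$. Clearly $F(0)=\tilde{\epsilon}\geq 0$. At $r=1$ the entire high-positive region of $E$ has been removed, and the hypothesis on $\epsilon$ is calibrated so that $\lambda(R_1)\geq\tau^+-\epsilon\geq\tfrac{1}{2}\tau^+$, whence $F(1)\leq\tilde{\epsilon}-\tfrac{1}{4}\tau^+\|f^+\|_\infty<0$. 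The intermediate value theorem then supplies some $r_0\in[0,1]$ with $F(r_0)=0$, and $K:=E\setminus R_{r_0}$ is compact and satisfies $\int_K f\,d\lambda=0$.

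The measure estimate should drop out cheaply: from $\tilde{\epsilon}=\int_{R_{r_0}}f\,d\lambda\geq\tfrac{1}{2}\|f^+\|_\infty\,\lambda(R_{r_0})$ I obtain $\lambda(R_{r_0})\leq 2\epsilon\|f\|_\infty/\|f^+\|_\infty$, and combining with $\lambda(E)\geq\lambda(D)-\epsilon$ yields the claimed bound (with the $\max$ in the constant absorbing the opposite-sign case). The one delicate point is checking that the quantitative hypothesis on $\epsilon$ is sharp enough to force $F(1)<0$; the presence of both $\tau^+\|f^+\|_\infty/\|f\|_\infty$ and the analogous $f^-$ quantity in the hypothesis is precisely what is needed to cover the two sign cases uniformly, and I would have to track the constants through the sign flip to ensure the same bound on $\lambda(K)$ in both cases.
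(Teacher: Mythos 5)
Your proposal is correct and follows essentially the same route as the paper's own proof: the same sets $R_r$, the same trimming function $F(r)=\int_{E\setminus R_r}f\,d\lambda$ with the Lipschitz continuity estimate, the same verification that $F(0)=\tilde{\epsilon}\geq 0$ while $F(1)<0$ via $\lambda(R_1)\geq\tau^+-\epsilon\geq\tfrac12\tau^+$ and the calibration of $\epsilon$, followed by the intermediate value theorem and the identical measure bound $\lambda(R_{r_0})\leq 2\epsilon\|f\|_\infty/\|f^+\|_\infty$. The sign-flip reduction to $\tilde{\epsilon}\geq 0$ and the absorption of the opposite case into the $\max$ in the final constant are also exactly how the paper handles it.
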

%\subsection{Shrinking the set}
The next lemma shows that if we have a compact set $K$ and a continuous, function $f$ on $K$ satisfying assumptions (i) and (ii) of Theorem \ref{TheoremNowhereConstant}, then for 
$0< c<\lambda(K)$ we can find a compact subset $E$ of $K$ such that $\lambda(K\setminus E) = c$,
 $\int_{E}fd\lambda = 0$, and such that $E$ does not contain the endpoints of $K$.\\

\begin{lem}\label{Lemma:shrinking}
	Let $K\subset [0,1]$ be a compact measurable set with $\lambda(K)>0$ and let $f\in L_\infty(K)$ be a continuous, real-valued, mean zero function satisfying assumptions (i) and (ii) of Theorem \ref{TheoremNowhereConstant}. For every $c\in (0,\lambda(K))$, there is a compact subset $E\subseteq K\cap (\inf K,\sup K)$ of measure $\lambda(E) = \lambda(K) - c$ such that $\int_E f d\lambda = 0$.
\end{lem}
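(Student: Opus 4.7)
Plan: Set $\alpha=\inf K$ and $\beta=\sup K$, and introduce the continuous cumulative integral $F(x):=\int_{K\cap[\alpha,x]}f\,d\lambda$ on $[\alpha,\beta]$, which vanishes at both endpoints by the mean-zero hypothesis. The degenerate case $f\equiv 0$ almost everywhere on $K$ reduces to picking any $a,b$ with $\alpha<a<b<\beta$ and $\lambda(K\cap[a,b])=\lambda(K)-c$ (which exist by continuity of $x\mapsto\lambda(K\cap[\alpha,x])$) and setting $E=K\cap[a,b]$.

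Assume $f\not\equiv 0$. I will build $E=K\setminus R$ with $R=K\cap W$ for an open set $W\subseteq\mathbb{R}$ containing both $\alpha$ and $\beta$; this automatically ensures $E$ is compact and contained in $K\cap(\alpha,\beta)$. Rather than using only the naive sliding window (which fails when the integral $J(s)$ of $K\cap[a(s),b(s)]$ has constant sign on the admissible range---as happens, for instance, whenever $f$ is positive near both endpoints), I take $W=(-\infty,b_1)\cup(a_2,b_2)\cup(a_3,\infty)$ with $\alpha<b_1<a_2<b_2<a_3<\beta$, so that $R$ is the disjoint union of a left end piece $K\cap[\alpha,b_1)$, an interior hole $K\cap(a_2,b_2)$, and a right end piece $K\cap(a_3,\beta]$. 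Fixing a small $\epsilon>0$ and choosing $b_1,a_3$ so that both end pieces have measure $\epsilon$, the measure constraint $\lambda(R)=c$ forces $\lambda(K\cap(a_2,b_2))=c-2\epsilon$, and the integral constraint $\int_R f\,d\lambda=0$ reduces to $\int_{K\cap(a_2,b_2)}f\,d\lambda=-(I_L+I_R)$, where $I_L,I_R$ are the end-piece integrals (of magnitude at most $\epsilon\|f\|_\infty$).

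To find suitable $(a_2,b_2)$, I slide $a_2$ across its admissible range while adjusting $b_2=b_2(a_2)$ by the continuity of $x\mapsto\lambda(K\cap[\alpha,x])$ to maintain $\lambda(K\cap(a_2,b_2))=c-2\epsilon$. The resulting integral $\Psi(a_2):=\int_{K\cap(a_2,b_2(a_2))}f\,d\lambda$ is continuous in $a_2$, and the intermediate value theorem then delivers the desired configuration provided $\Psi$ attains the target value $-(I_L+I_R)$. Since $f\not\equiv 0$ and $\int_K f=0$, both $\{f>0\}\cap K$ and $\{f<0\}\cap K$ have positive measure, and by continuity of $f$ there are positions of the sliding interval for which $\Psi$ is positive (when the interval sits in a region of predominantly positive $f$) and others for which it is negative. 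Since the target $-(I_L+I_R)$ is of order $\epsilon$ and hence close to $0$, it lies within the range of $\Psi$ for all sufficiently small $\epsilon>0$, and IVT yields the required $a_2^*$.

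The main obstacle is rigorously verifying that $\Psi$ attains both signs as $a_2$ slides, particularly in configurations where $\{f>0\}$ and $\{f<0\}$ are distributed so that no interval of measure $c-2\epsilon$ fits entirely in one sign region. In such cases one exploits continuous dependence: as the sliding interval moves, its overlap with the positive and negative regions changes continuously (this is where assumption (ii) is essential, ruling out the atoms below $\kappa$ that would cause jump discontinuities in the sliding integral), so the weighted contribution to $\Psi$ shifts continuously from dominance by positive values to dominance by negative ones, producing values of $\Psi$ on both sides of zero.
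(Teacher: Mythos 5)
Your reduction---remove two end pieces of measure $\epsilon$ plus one interior window $K\cap(a_2,b_2)$ of measure $c-2\epsilon$, then solve $\Psi(a_2)=-(I_L+I_R)$ by the intermediate value theorem---is cleanly set up, but the step on which everything hinges, namely that $\Psi$ attains the target, is not proved and is in fact false in general. Because your interior window is a single contiguous interval of fixed measure, it can be forced to carry a surplus of one sign no matter where it sits. Concretely, let $K$ be $[0,1]$ with two small gaps deleted around $1/3$ and $2/3$ (so that the following $f$ is continuous on $K$), let $\kappa=1$, let $f=1$ on the middle block $K\cap[1/3,2/3]$ and let $f$ take atomless values close to $-1/2$ on the two outer blocks, normalized to be mean zero; this satisfies assumptions (i) and (ii). Take $c=2/3$. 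Every interval meeting $K$ in measure $c-2\epsilon$ and contained in $(b_1,a_3)$ intersects the positive block in measure at least $1/3-\epsilon-O(\delta)$, while its intersection with $\{f<0\}$ has measure at most $1/3-\epsilon$, so
$\Psi\geq(1/3-\epsilon)-\tfrac12(1/3-\epsilon)-O(\delta)\geq 1/8$
for all admissible positions once $\epsilon,\delta$ are small. The target $-(I_L+I_R)$ is of order $\epsilon$, so the equation $\Psi(a_2)=-(I_L+I_R)$ has no solution; in particular $\Psi$ never changes sign, and the ``continuous dependence'' argument in your last paragraph cannot manufacture a sign change. Note that assumption (ii) constrains only the level sets (atoms) of $f$, not the geometry of $\{f>0\}$, so it does not exclude this configuration.

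The paper's proof circumvents exactly this obstruction by making the removed set sign-selective rather than contiguous: it removes $A_t=\{f>0\}\cap\bigl([0,\inf K+a_t)\cup(\sup K-a_t,1]\bigr)$ of measure $t$ and $B_{H(t)}=\{f<0\}\cap\bigl([0,\inf K+b_{H(t)})\cup(\sup K-b_{H(t)},1]\bigr)$ of measure $H(t)$, where $H(t)$ is defined implicitly by the exact cancellation $\int_{A_t}f+\int_{B_{H(t)}}f=0$. Since the positive and negative removals are decoupled, no forced surplus arises, and the total removed measure $t+H(t)$ increases continuously from $0$ to $\upsilon^++\upsilon^-=\lambda(K)$ (using that $\{f=0\}$ is null under assumption (ii)), so every $c\in(0,\lambda(K))$ is realized. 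Both removed sets are open in $K$ and cover $\{f\neq0\}$ near $\inf K$ and $\sup K$, which yields compactness of $E$ and, after discarding the null set $\{f=0\}$, the containment $E\subseteq K\cap(\inf K,\sup K)$. To repair your argument you would have to let the interior ``hole'' intersect $\{f>0\}$ and $\{f<0\}$ in independently tunable amounts, which essentially reproduces the paper's construction; as written, the proposal has a genuine gap.
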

\begin{proof}
	Let $K$, $f$ and $c$ be given. If $f = 0$ we can simply take $E = K\cap [\inf K + r,\sup K -r]$ for some $r>0$ such that $\lambda(E) = \lambda(K) - c$. Hence, without loss of generality, we shall assume below that $\lambda(f^{-1}(\{0\})) = 0$. Setting 
	$$\upsilon^\pm = \lambda(\{f^\pm>0\})$$ we observe that $\upsilon^\pm>0$ since $f$ is mean zero and does not identically vanish. Now for $0\leq r\leq \upsilon^+$, we set $$A_r = \{f>0\} \cap ([0,\inf K + a_r)\cup (\sup K - a_r,1]),$$ where $a_r\geq 0$ is chosen to satisfy the condition $\lambda(A_r) = r$.
Furthermore, for $0\leq r\leq \upsilon^-$, let 
$$
B_r = \{f<0\} \cap ([0,\inf K + b_r)\cup (\sup K - b_r,1]),
$$ 
where $b_r\geq 0$ is chosen to satisfy the condition $\lambda(B_r) = r$. We have $A_{r_1}\subset A_{r_2}$ and $B_{r_1}\subset B_{r_2}$ whenever $r_1<r_2$.
	Let $F_\pm:[0,\upsilon^\pm]\to \RR^+$ be given by 
$$F_+(r) = \int_{A_r}fd\lambda\quad{\rm
	and}\quad F_-(r) = -\int_{B_r}fd\lambda,
$$
respectively. The functions $F_\pm$ are continuous and strictly increasing and we also have 
$$F_+(0) = F_-(0) = 0\quad{\rm and}\quad 
F_+(\upsilon^+) = F_-(\upsilon^-)$$
 as $f$ is mean zero on $K$. 
	
	Let $G: [0,\upsilon^+]\times [0,\upsilon^-] \to \RR$ be given by
	$G(t,r) = F_-(r) - F_+(t)$ so that we have 
	$$G(t,0) = F_-(0) - F_+(t)\leq 0$$ and
	$$G(t,\upsilon^-) = F_-(\upsilon^-) - F_+(t) = F_+(\upsilon^+) - F_+(t) \geq 0$$ for all $t\in [0,\upsilon^+]$. The function $G(t,\cdot)$ is continuous, and therefore there is a $0\leq x\leq\upsilon^-$ with $G(t,x) = 0$. Further, since $G(t,\cdot)$ is strictly increasing the value of $x$ is uniquely determined. Now we can define $$H:[0,\upsilon^+]\to [0,\upsilon^-]$$ by letting $H(t)\in [0,\upsilon^-]$ be the unique real number satisfying  $G(t,H(t)) = 0$.
	
For $t\in [0,\upsilon^+]$ we now have $F_-(H(t)) - F_+(t) = 0$, or equivalently,
$$
\int_{A_t}fd\lambda + \int_{B_{H(t)}}fd\lambda = 0.
$$
Hence, setting $$E_t = K\setminus (A_t\cup B_{H(t)}),$$ we obtain a compact set (due to the fact that $A_t$ and $B_{H(t)}$ are open in $K$), such that 
$$
\int_{E_t}fd\lambda = 0,\quad \forall t\in [0,\upsilon^+].$$ 
Observing that $A_t\cup B_{H(t)} \subset K$, we have  $$\lambda(E_t) = \lambda(K) - \lambda(A_t\cup B_{H(t)})=\lambda(K) - (\lambda(A_t)+\lambda( B_{H(t)}))= \lambda(K) - (t+H(t)).$$

The function $G$ is continuous, strictly decreasing in $t$ and strictly increasing in $r$, and therefore we have that $H$ is continuous and strictly increasing. Observing that $H(0) = 0$ and that $$\upsilon^+ + H(\upsilon^+) = \upsilon^+ + \upsilon^- = \lambda(K)
$$ 
(the latter follows from the fact that $f$ is non-zero almost everywhere on $K$), 
we infer that there exists a real number $t_0\in (0,\upsilon^+)$ such that $t_0+H(t_0) = c$. Setting $E = E_{t_0}$ we have 
$$\lambda(E) = \lambda(K) - c\quad{\rm  and}\quad \int_E f d\lambda = 0.
$$ 
It remains to verify that $E\subseteq K\cap (\inf K,\sup K)$.

Let $a = \min(a_{t_0},b_{t_0})$. Then the set 
$E\cap  \{f\neq 0\}$ has the empty intersection with the set 
$({\rm inf} K, {\rm inf} K + a)$ and also with the interval $({\rm sup} K -a,  {\rm sup} K)$. Due to the assumption 
$\lambda(\{f=0\}) = 0$, we may replace the set $E$ with the set $E' =  E \cap [{\rm inf} K + a,{\rm sup} K -a]$. Then, we have that
$E\setminus E' \subseteq \{f=0\}$ is a null set, and $E'$ is a compact set separated from the points ${\rm inf} K$ and ${\rm sup} K$. This completes the proof.
\end{proof}

\subsection{Rational splitting of the set $K$}
In this subsection, we shall assume that the compact set $K\subset [0,1]$ is presented as a union of two disjoint compact sets $K_1$ and $K_2$. In this case, we show below that the choice of the subset $E$ defined in the preceding lemma can be done in such a way as to ensure that the numbers $\frac{\lambda(E\cap K_1)}{\lambda(E)}$ and $\frac{\lambda(E\cap K_2)}{\lambda(E)}$ are rational.

\begin{lem}\label{Lemma:splitting1}
	Let $K_1$ and $K_2$ be compact subsets of $[0,1]$ with $\lambda(K_1\cap K_2)=0$ and assume that $K = K_1\cup K_2$ has positive measure. Let $f\in L_\infty(K)$ be a continuous, real-valued function satisfying assumptions (i) and (ii) of Theorem \ref{TheoremNowhereConstant}.   
Then, for $\epsilon>0$ we can find a compact subset $E\subseteq K$ of positive measure such that 
$\lambda(E) \geq \lambda(K) - \epsilon$ and 
$\int_{E}fd\lambda = 0$ and such that 
$$\frac{\lambda(E\cap K_1)}{\lambda(E)} = \frac{p}{q}$$ 
for some integers $p\ge 0$ and $q\ge 1$.
	\begin{proof}
		Let $K_1,K_2,f$ and $\epsilon$ satisfy the assumptions of Lemma \ref{Lemma:splitting1}. We can assume that both $\lambda(K_1),\lambda(K_2)>0$ otherwise we can take $E= K$. Further, if $f = 0$ we simply set $E = (K_1 \cap [0,r]) \cup K_2$ where $0\leq r\leq 1$ is chosen to satisfy $\lambda(E)\geq \lambda(K)-\epsilon$ and 
		$$\frac{\lambda(K_1\cap [0,r])}{\lambda(K_1\cap [0,r]) + \lambda(K_2)} = \frac{\lambda(E\cap K_1)}{\lambda(E)}\in\QQ.$$
Without loss of generality, we can assume that $f\not=0$. Writing 
$$
f_1 = f|_{K_1}\quad{\rm  and}\quad f_2 = f|_{K_2},
$$
we observe that $f_1,f_2\not=0$. Hence, letting 
$$\upsilon_1^\pm = \lambda(\{f_1^\pm> 0\})\quad{\rm
		and}\quad \upsilon_2^\pm = \lambda(\{f_2^\pm > 0\}),
$$
we have that $\upsilon_1^+>0$ or else $\upsilon_1^->0$ and likewise $\upsilon_2^+>0$, or else $\upsilon_2^->0$, due to the fact that $f_1,f_2\not=0$. 
Now if $\upsilon_1^+>0$ and $\upsilon_2^+>0$ then we must also have $\upsilon_1^->0$ or $\upsilon_2^->0$, again due to the fact that $f$ is mean zero. 
Hence we can assume that $\upsilon_1^+,\upsilon_2^->0$, the case that $\upsilon_1^-,\upsilon_2^+>0$ being similar by changing the roles of $K_1$ and $K_2$.
If $\lambda(\{f_1\geq \kappa\}) < \upsilon_1^+$, then 
we choose		$\omega$ to be the positive number $ \lambda(\{0< f_1< \kappa\})$.	In this case, we consider the function 
$$L:[0,\kappa]\to \RR\quad{\rm given\ by}\quad L(a) = \lambda(f_1^{-1}(0,a)).$$
 We have that $L$ is non-decreasing with $L(0) = 0$ and $L(\kappa) = \omega$. Now since $\lambda(f_1^{-1}(\{y\})) = 0$ for $y< \kappa$ we have that $L$ is continuous. Hence, for $r\in [0,\omega]$ we can find $0\leq a_r\leq \kappa$ such that for the set $A_r := f_1^{-1}(0,a_r)$ we have $\lambda(A_r) = L(a_r) =r$. Moreover, we can choose the largest possible $a_r$ with this property, that is
$$
a_r=\sup\{a:\ L(a)=r\}.
$$
Clearly, $L(a_r) = r$ as $L$ is continuous nondecreasing function.

Now, if $\lambda(\{f_1\geq \kappa\}) = \upsilon_1^+$, then we set 
$$\omega = \upsilon_1^+$$ 
and define 
the set 
$$A_r := \{f_1>0\} \cap [0,a_r), \quad {\rm for}\quad 0\leq r<\omega,
$$
where we choose $0\leq a_r\leq 1$ such that $\lambda(A_r) = r$. In both cases, we have that $A_r$ is open in $K_1$.
		
Further, we define the set 
$$
B_r = f_2^{-1}((-\infty,b_r))\subseteq K_2, \quad {\rm for}\quad0\leq r\leq \upsilon_2^-,
$$
where $b_r\leq 0$ is chosen  to satisfy $\lambda(B_r) = r$  and 
$$
b_r=\sup\{b:\ L(b)=r\}.
$$ 
Note that such a choice of  $b_r$ is possible thanks to the assumption that $\lambda(f^{-1}(\{y\})) = 0$ for all $y\leq 0$. 
We now have 
$$A_{r_1}\subset A_{r_2}\quad{\rm and}\quad B_{r_1}\subset B_{r_2}\quad{\rm whenever}\quad r_1<r_2,
$$ 
which imply that
\begin{equation}\label{a_r}
a_{r_1}< a_{r_2}\quad{\rm and}\quad b_{r_1}< b_{r_2}\quad{\rm whenever}\quad r_1<r_2,
\end{equation}

Let 
$$F_1,F_2:[0,\min\{\omega,\upsilon_2^-\}]\to \RR^+\quad{\rm be\ given\ by}\quad F_1(r) = \int_{A_r}fd\lambda\quad{\rm
		and}\quad F_2(r) = -\int_{B_r}fd\lambda.
$$ 
These are continuous, strictly increasing functions with $F_1(0) = F_2(0) = 0$. Let us compute their derivatives. In the case when $A_r = \{f_1>0\}\cap [0,a_r)$, we have for $h>0$ that
		$$\frac{F_1(r+h) - F_1(r)}{h}
		=\frac{\int_{A_{r+h}\setminus A_r}fd\lambda}{h} = \frac{\kappa h}{h} = \kappa.$$
		Hence, $F_1'(r) = \kappa$. On the other hand, in the case when $A_r = f_1^{-1}(0,a_r)$, we have 
		\begin{align*}
		\left| \frac{F_1(r+h) - F_1(r)}{h} -a_r\right|
		&=\left|\frac{\int_{A_{r+h}\setminus A_r}f - a_r d\lambda}{h}\right|\\
		&\leq\frac{\int_{A_{r+h}\setminus A_r}|f -a_r|d\lambda}{h} \leq |a_{r+h} - a_r|
		\end{align*}
		Now, since $a_r$ is chosen maximally such that $\lambda(A_r) = r$, it follows that for any $\epsilon>0$ we have 
$$\lambda(f_1^{-1}(0,a_{r}+\epsilon))>\lambda(A_r) =r.$$
		Hence,  if $r<r+h<\lambda(f_1^{-1}(0,a_{r}+\epsilon))$ we have $a_r<a_{r+h}<a_r + \epsilon$, so that $|a_{r+h}-a_r|\to 0$ as $h\downarrow 0$. Hence, the right hand derivative  $F_1'(r^+) = a_r$ is strictly increasing.
		Now, we consider the right hand derivative of $F_2$. For $h>0$ we have
		\begin{align*}
		\left| \frac{F_2(r+h) - F_2(r)}{h} + b_r\right|
		&=\left|\frac{\int_{B_{r+h}\setminus B_r}b_r -f d\lambda}{h}\right|\\
		&\leq\frac{\int_{B_{r+h}\setminus B_r}|b_r -f|d\lambda}{h} \leq |b_{r+h} - b_r|
		\end{align*}
		Now, since $b_r$ is chosen maximally such that $\lambda(B_r) = r$,  it follows that 
$$\lambda(f_2^{-1}(-\infty,b_{r}+\epsilon))>\lambda(B_r) =r,\quad \forall \epsilon>0.
$$
		Hence if $r<r+h<\lambda(f_2^{-1}(-\infty,b_{r}+\epsilon))$ we have $b_r<b_{r+h}<b_r + \epsilon$, so that $|b_{r+h}-b_r|\to 0$ as $h\downarrow 0$. This means that for the right hand derivative we have  $F_2'(r^+) = -b_r$, which is strictly decreasing due to \eqref{a_r}.
		
		Now fix $0<r_0 < \min\{\omega,\upsilon_2^-\}$ and choose $0<t_0<\min\{\omega,\upsilon_2^-\}$ with $0< F_1(t_0)< F_2(r_0)$, which can be done since $F_1$ is continuous and $F_1,F_2$  are strictly increasing (indeed, since $\lambda(A_r)$ is strictly increasing and since$ f|_{A_r} > 0$ we have that $ F_1(r) =  \int_{A_r} f d\lambda$ is also strictly increasing; the argument for $F_2$ is the same).
		
Let 
$$G: [0,t_0]\times [0,r_0] \to \RR\quad{\rm be\ given\ by}\quad G(t,r) = F_2(r) - F_1(t)
$$ 
so that we have 
$$G(t,0) = F_2(0) - F_1(t)\leq 0\quad{\rm and}\quad
G(t,r_0) = F_2(r_0) - F_1(t) > 0\quad {\rm for}\quad t\in [0,t_0].
$$ 
Observe that $G(t,\cdot)$ is continuous, and therefore there exists  $0\leq x<r_0$ such that $G(t,x) = 0$. 
Further, since $G(t,\cdot)$ is strictly decreasing this $x$ is unique. Now define $H:[0,t_0]\to [0,r_0)$ as the unique value $H(t)\in [0,r_0)$ satisfying $G(t,H(t)) = 0$.
		
For $t\in [0,t_0]$ we now have 
\begin{equation}\label{F_2(H(t))}
F_2(H(t)) - F_1(t) = 0,
\end{equation}
or equivalently,
		$$\int_{A_t}fd\lambda + \int_{B_{H(t)}}fd\lambda = 0.$$
Let us now set 
$$E_t = K\setminus (A_t\cup B_{H(t)}).
$$ We observe that $E_t$ is a compact set since $A_t,B_{H(t)}$ are open in $K_1,K_2$ respectively, and that
combining the assumption and the preceding display we have
$$
\int_{E_t} f d\lambda = \int_K f d\lambda - 0 = 0.
$$
Now $\lambda(E_t) = \lambda(K) - (H(t) + t)$ (indeed, by the construction $\lambda(A_t) = t, \lambda(B_{H(t)}) = H(t)$, and the sets $A_t$ and $B_{H(t)}$ are disjoint and sit inside of the set $K$). 

Now, since the function $G$ is continuous, 
and strictly increasing in the first variable $r$ and strictly decreasing in the second variable $t$, 
it follows that the function $H$ is continuous and strictly increasing. 
Hence, recalling that $H(0) = 0$, we can select $t_1>0$ so small that 
$$\lambda(E_{t_1})\geq \lambda(K) - \epsilon\quad{\rm that\ is}\quad H(t_1) + t_1\leq \epsilon.
$$ 
Now, we let 
\begin{equation}\label{R(t)}
R(t) = \frac{\lambda(E_t\cap K_1)}{\lambda(E_t)} = \frac{\lambda(K_1) - t}{\lambda(K) - (t+H(t))},\quad {\rm for}\quad t\in [0,t_1].
\end{equation}
 
We have 
$$R(0) = \frac{\lambda(K_1)}{\lambda(K)}.$$ 
Now, suppose that $R$ is constant on $[0,t_1]$. In this case, substitution to the left hand side of \eqref{R(t)} the value of $R(0)$ and  solving for $H(t)$,  we obtain 
$$
H(t) = \frac{\lambda(K) - \lambda(K_1)}{\lambda(K_1)}t,\quad 
H'(t)=\frac{\lambda(K) - \lambda(K_1)}{\lambda(K_1)},
\quad t\in [0,t_1].
$$ 
Further, recalling \eqref{F_2(H(t))}, we have  $F_2(H(t)) = F_1(t)$  so that differentiating from the right this equality yields 
$$F_2'(H(t))H'(t) = F_1'(t),\quad t\in [0,t_1].
$$ 
Now, since $F_2'\circ H$ is strictly decreasing and since $F_1'$ is strictly increasing (see \eqref{a_r}) and since $H'$ is a positive constant, this yields a contradiction with the equality
$$F'_2(H(t))H'(t) = F'_1(t).
$$ 
This contradiction shows that $R$ is not constant on $[0,t_1]$. Hence, since $R$ is continuous we can find a $t_2\in [0,t_1]$ such that $R(t_2) = \frac{p}{q}$ for some  integer $p\ge 0$ and some positive integer $q$. 
Now setting $E = E_{t_2}$, we obtain 
$$\lambda(E)\geq \lambda(K) - \epsilon, \quad \int_{E}fd\lambda = 0,\quad{\rm and}\quad \frac{\lambda(E\cap K_1)}{\lambda(E)} = R(t_2) = \frac{p}{q}.$$ This completes the proof.	
	\end{proof}
\end{lem}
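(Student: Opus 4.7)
The plan is to continuously shrink $K$ by removing an open subset $A_t\subseteq K_1$ of $f$-positive points and an open subset $B_s\subseteq K_2$ of $f$-negative points, coupling $s$ to $t$ through a single constraint that keeps the integral zero, and then to show that the ratio $\lambda(E\cap K_1)/\lambda(E)$ varies with the parameter, so by continuity takes a rational value.

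I would start by disposing of trivial cases. If one of $\lambda(K_i)$ vanishes, take $E=K$. If $f\equiv 0$, take $E=(K_1\cap[0,r])\cup K_2$ with $r$ chosen so that $\lambda(E)\geq\lambda(K)-\epsilon$ and the ratio is rational, using continuity of $r\mapsto\lambda(K_1\cap[0,r])$. Otherwise, write $f_i=f|_{K_i}$ and $\upsilon_i^{\pm}=\lambda(\{f_i^{\pm}>0\})$. Since $\int_K f\,d\lambda=0$ and $f\not\equiv 0$, at least one of the pairs $(\upsilon_1^+,\upsilon_2^-)$ and $(\upsilon_1^-,\upsilon_2^+)$ is strictly positive in both coordinates; by symmetry (interchanging $K_1$ and $K_2$ sends a rational ratio to $1$ minus a rational ratio), I focus on the case $\upsilon_1^+>0$ and $\upsilon_2^->0$.

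Now I set up the parametric family. Using assumption (ii), the distribution of $f_1$ is continuous on $(0,\kappa)$, so for each small $r>0$ I can pick $a_r\in[0,\kappa]$ maximally such that $A_r:=f_1^{-1}((0,a_r))$ satisfies $\lambda(A_r)=r$; in the degenerate situation $\lambda(\{f_1\geq\kappa\})=\upsilon_1^+$, I instead use initial portions $A_r:=\{f_1>0\}\cap[0,a_r)$. Dually, let $B_r:=f_2^{-1}((-\infty,b_r))\subseteq K_2$ with $b_r\leq 0$ maximal so $\lambda(B_r)=r$; this is possible since assumption (ii) rules out plateaus of $f$ on $(-\infty,\kappa)$. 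Each $A_r,B_r$ is open in its $K_i$. Set $F_1(t)=\int_{A_t}f\,d\lambda$ and $F_2(s)=-\int_{B_s}f\,d\lambda$; both are continuous and strictly increasing with $F_i(0)=0$. The relation $F_2(H(t))=F_1(t)$ implicitly defines a continuous strictly increasing function $H$ with $H(0)=0$ on an interval $[0,t_1]$, chosen small enough that $\lambda(E_t)\geq\lambda(K)-\epsilon$, where $E_t:=K\setminus(A_t\cup B_{H(t)})$. By construction $E_t$ is compact with $\int_{E_t}f\,d\lambda=0$ and $\lambda(E_t)=\lambda(K)-t-H(t)$.

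The main obstacle, and heart of the proof, is showing that the continuous ratio $R(t)=(\lambda(K_1)-t)/(\lambda(K)-t-H(t))$ is non-constant on $[0,t_1]$; otherwise it only hits the possibly irrational value $\lambda(K_1)/\lambda(K)$. I would compute right-derivatives and use the maximal choice of $a_r$ and $b_r$ to show that $F_1'(t^+)=a_t$ (or $\kappa$) is strictly increasing in $t$, while $F_2'(s^+)=-b_s$ is strictly decreasing in $s$. If $R$ were constant, solving for $H$ yields $H(t)=ct$ with positive constant $c=(\lambda(K)-\lambda(K_1))/\lambda(K_1)$; differentiating $F_2(H(t))=F_1(t)$ from the right gives $c\cdot F_2'(H(t)^+)=F_1'(t^+)$, equating a strictly decreasing function of $t$ to a strictly increasing one, a contradiction. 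Hence $R$ is continuous and non-constant, so by the intermediate value theorem hits some rational $p/q$ at $t_2\in[0,t_1]$; setting $E=E_{t_2}$ yields all three required properties.
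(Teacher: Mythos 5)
Your proposal is correct and follows essentially the same route as the paper's own proof: the same reduction to the case $\upsilon_1^+,\upsilon_2^->0$, the same one-parameter families $A_t$, $B_{H(t)}$ with maximal choices of $a_r,b_r$, and the same non-constancy argument for $R(t)$ via the monotone one-sided derivatives of $F_1$ and $F_2$.
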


\begin{lem}\label{Lemma:splitting2}
	Let $K \subseteq [0,1]$ be a compact set of positive measure, and let
$$k^M = \frac{\inf K + \sup K}{2},\quad
K^L = K\cap [0,k^M],\quad K^R = K\cap [k^M,1].
$$
Let $f\in L_\infty(K)$ be a continuous real-valued function satisfying assumptions (i) and (ii) of Theorem \ref{TheoremNowhereConstant}.

 %with $\int_{K}fd\lambda =0$ and such that there is a scalar %$\kappa$ satisfying
%$$\lambda(f^{-1}(\{y\}))=0\quad{\rm  for}\quad y< \kappa\ {\rm and\ that}\quad  f|_{\{f\geq \kappa\}} = \kappa\quad {\rm is \ constant.}
%$$
Let $c$ be an arbitrary scalar from $(0,\lambda(K))$ provided that  $\lambda(K^L) = 0$ or if $\lambda(K^R) = 0$ or from $(0,\min\{\lambda(K^L),\lambda(K^R)\})$ provided
$\lambda(K^L) > 0$ and $\lambda(K^R) > 0$.
Then there exists a compact subset 
$$
E\subset K\cap (\inf K,\sup K)\quad{\rm with}\quad 
\frac{\lambda(E\cap K^L)}{\lambda(E)} = \frac{p}{q}$$
for some  integer $p\ge 0$ and some positive integer $q$ and such that 
$$\lambda(E) = \lambda(K) -c\quad{\rm and}\quad
\int_{E}fd\lambda = 0.$$
	\begin{proof}
		If $\lambda(K^L) = 0$, we apply Lemma \ref{Lemma:shrinking} to the set $K^R$ and obtain the compact subset 
$$E\subseteq K^R\cap (\inf K^R,\sup K^R)\subseteq K\cap (\inf K,\sup K)
$$
such that
$$ \int_{E}fd\lambda = 0\quad{\rm and}\quad\lambda(E) = \lambda(K^R) - c = \lambda(K) -c.
$$ 
Moreover $\frac{\lambda(E\cap K^L)}{\lambda(K)} = 0$ so that $E$ satisfies the assertion of the lemma. The similar argument holds when $\lambda(K^R)=0$ via interchanging the roles of $K^L$ and $K^R$.

We can thus assume that $\lambda(K^L),\lambda(K^R)>0$.
Now we apply Lemma \ref{Lemma:splitting1} when 
$$K=K^L\cup K^R$$ 
with $f$ and $\frac{c}{2}$ to obtain a compact subset 
$$\widetilde{K}\subseteq K\quad{\rm with} \quad \int_{\tilde{K}}fd\lambda = 0\quad{\rm and}\quad
\frac{\lambda(\tilde{K}\cap K^L)}{\lambda(\tilde{K})} = \frac{p}{q}$$ and 
$$\lambda(\tilde{K})\geq \lambda(K) - \frac{c}{2}.$$ 
We set 
$$\tilde{K}^L = \tilde{K} \cap K^L\quad{\rm and}\quad\tilde{K}^R = \tilde{K} \cap K^R.
$$ 
We also set 
$$\Delta = \lambda(K) - \lambda(\tilde{K})$$ 
so that we have $0<\Delta<c$. Furthermore we have 
$$\lambda(\widetilde{K}^L)=\lambda(K^L) - \lambda(K^L \cap (K\setminus\widetilde{K}))\geq \lambda(K^L) - \Delta$$ and likewise 
$$\lambda(\widetilde{K}^R)\geq \lambda(K^R) - \Delta.$$
By our choice, we have
$$0<(c-\Delta)<\min\{\lambda(K^L) - \Delta,\lambda(K^R) - \Delta\} \leq\min\{\lambda(\widetilde{K}^L),\lambda(\widetilde{K}^R)\}.
$$ 
Now set 

\begin{equation}\label{h^L}
h^L = f|_{\widetilde{K}^L} - \frac{1}{\lambda(\widetilde{K}^L)}\int_{\widetilde{K}^L}fd\lambda\quad{\rm and}\quad h^R = f|_{\widetilde{K}^R} - \frac{1}{\lambda(\widetilde{K}^R)}\int_{\widetilde{K}^R}fd\lambda.
\end{equation}

Since the function $f$ satisfies assumption (ii) of Theorem \ref{TheoremNowhereConstant}, the same holds for $h^L$ and $h^R$.
This observation shows  that we can apply Lemma \ref{Lemma:shrinking} to the set $\tilde{K}^L$, the function $h^L$ and the scalar $\frac{p}{q}(c-\Delta)$ and also to the set $\tilde{K}^R$, the function $h^R$ and the scalar $(1-\frac{p}{q})(c-\Delta)$. Such applications yield
compact subsets 
$$E^L\subseteq \tilde{K}^L \cap (\inf \widetilde{K}^L, \sup \widetilde{K}^L)\quad{\rm and}\quad E^R\subseteq \tilde{K}^R\cap (\inf \widetilde{K}^R, \sup \widetilde{K}^R)$$ 
with 
\begin{equation}\label{lambda(E^L)}
\lambda(E^L) = \lambda(\widetilde{K}^L) - (c-\Delta)\frac{p}{q}\quad{\rm and}\quad \lambda(E^R) = \lambda(\widetilde{K}^R) - (c-\Delta)(1-\frac{p}{q})
\end{equation}
and furthermore 
$$\int_{E^L}h^Ld\lambda = \int_{E^R}h^Rd\lambda = 0.$$
Substituting into equalities above definitions of $h^L$ and $h^R$ from \eqref{h^L}, we arrive at
$$\int_{E^L}fd\lambda = \frac{\lambda(E^L)}{\lambda(\widetilde{K}^L)}\int_{\widetilde{K}^L}fd\lambda\quad{\rm and}\quad \int_{E^R}fd\lambda = \frac{\lambda(E^R)}{\lambda(\tilde{K}^R)}\int_{\tilde{K}^R}fd\lambda.
$$
Now, we define a compact set $E$ by setting 
$$E= E^L\cup E^R\subseteq K\cap (\inf K,\sup K).
$$ We have that
		\begin{align*}
		\int_{E}fd\lambda &= \frac{\lambda(E^L)}{\lambda(\tilde{K}^L)}\int_{\tilde{K}^L}fd\lambda
		+ \frac{\lambda(E^R)}{\lambda(\tilde{K}^R)}\int_{\tilde{K}^R}fd\lambda\\
		&=(1-(c-\Delta)\frac{\frac{p}{q}}{\lambda(\tilde{K}^L)})\int_{\tilde{K}^L}fd\lambda
		+ (1-(c-\Delta)\frac{1 -\frac{p}{q}}{\lambda(\tilde{K}^R)})\int_{\tilde{K}^R}fd\lambda\\
		&=(1-\frac{(c-\Delta)}{\lambda(\tilde{K})})\int_{\tilde{K}^L}fd\lambda
		+ (1-\frac{(c-\Delta)}{\lambda(\tilde{K})})\int_{\tilde{K}^R}fd\lambda\\
		&=(1-\frac{(c-\Delta)}{\lambda(\tilde{K})})\int_{\tilde{K}^L\cup \tilde{K}^R}fd\lambda = 0
		\end{align*}
Furthermore 
$$\lambda(E) = \lambda(\tilde{K}^L) - (c-\Delta)\frac{p}{q} + \lambda(\tilde{K}^R) - (c-\Delta)(1-\frac{p}{q}) = \lambda(\tilde{K}) - (c-\Delta) = \lambda(K) - c.
$$
Finally, we claim that
$$\frac{\lambda(E^L)}{\lambda(\tilde{K}^L)} = \frac{\lambda(E^R)}{\lambda(\tilde{K}^R)}.$$

Indeed, by \eqref{lambda(E^L)}, we have
$$\frac{\lambda(E^L)}{\lambda(\tilde{K}^L)}=\frac{\lambda(\tilde{K}^L)-(c-\Delta)\frac{p}{q}}{\lambda(\tilde{K}^L)}=
    \frac{\lambda(\tilde{K})-(c-\Delta)}{\lambda(\tilde{K})},
    $$ 
due to the equality $\lambda(\tilde{K}^L)=\lambda(\tilde{K})\frac{p}{q}$. Similarly,
$$\frac{\lambda(E^R)}{\lambda(\tilde{K}^R)}=\frac{\lambda(\tilde{K})-(c-\Delta)}{\lambda(\tilde{K})}.
$$
Hence, recalling that $E\cap K^L=E^L$, we have
		\begin{align*}
		\frac{\lambda(E^L)}{\lambda(E)} &=\frac{\lambda(E^L)}{\lambda(E^L)+\lambda(E^R)} = \frac{\lambda(\tilde{K}^L)}{\lambda(\tilde{K}^L) +\lambda(\tilde{K}^R)} = \frac{\lambda(\tilde{K}^L)}{\lambda(\tilde{K})} = \frac{p}{q}.
		\end{align*}

	\end{proof}
\end{lem}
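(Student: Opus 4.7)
The plan is to handle the degenerate situations first and then reduce the main case to a two-step construction: apply Lemma \ref{Lemma:splitting1} to obtain a rational split with approximately the correct measure, and then use Lemma \ref{Lemma:shrinking} on the two halves to trim down to exactly $\lambda(K)-c$ while preserving both the rational ratio and the zero integral.

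In the degenerate case $\lambda(K^L)=0$, the set $K^R$ agrees with $K$ up to a null set, so I would apply Lemma \ref{Lemma:shrinking} directly to $K^R$ and $f$ with the value $c$; the resulting $E\subseteq K^R\cap(\inf K^R,\sup K^R)$ is contained in $K\cap(\inf K,\sup K)$ and satisfies the conclusion with $p=0$, $q=1$. The case $\lambda(K^R)=0$ is symmetric with $p=q=1$.

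For the main case $\lambda(K^L),\lambda(K^R)>0$, I would begin by invoking Lemma \ref{Lemma:splitting1} on the decomposition $K=K^L\cup K^R$ with tolerance $c/2$, producing a compact $\tilde K\subseteq K$ with $\int_{\tilde K}f\,d\lambda=0$, $\lambda(\tilde K)\geq\lambda(K)-c/2$, and $\lambda(\tilde K\cap K^L)/\lambda(\tilde K)=p/q$ for some integer $p\geq 0$ and positive integer $q$. Writing $\tilde K^L=\tilde K\cap K^L$, $\tilde K^R=\tilde K\cap K^R$, and $\Delta=\lambda(K)-\lambda(\tilde K)\in[0,c/2]$, I still need to remove exactly $c-\Delta$ further measure while preserving the ratio $p/q$ and the zero integral. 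The natural prescription is to remove $(p/q)(c-\Delta)$ from $\tilde K^L$ and $(1-p/q)(c-\Delta)$ from $\tilde K^R$, so that the proportions are maintained.

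The main technical obstacle is that Lemma \ref{Lemma:shrinking} requires a mean zero function, whereas $f|_{\tilde K^L}$ and $f|_{\tilde K^R}$ generally have nonzero means. The remedy is to center them by setting
\[
h^L=f|_{\tilde K^L}-\frac{1}{\lambda(\tilde K^L)}\int_{\tilde K^L}f\,d\lambda,\qquad
h^R=f|_{\tilde K^R}-\frac{1}{\lambda(\tilde K^R)}\int_{\tilde K^R}f\,d\lambda,
\]
which still satisfy assumption (ii) since a constant shift does not disturb it. I would then apply Lemma \ref{Lemma:shrinking} to $(\tilde K^L,h^L)$ and $(\tilde K^R,h^R)$ with removal sizes $(p/q)(c-\Delta)$ and $(1-p/q)(c-\Delta)$ respectively, obtaining compact subsets $E^L,E^R$ of the prescribed measures, sitting in the open interiors, and with $\int_{E^L}h^L\,d\lambda=\int_{E^R}h^R\,d\lambda=0$. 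Unwinding the centering gives $\int_{E^L}f\,d\lambda=(\lambda(E^L)/\lambda(\tilde K^L))\int_{\tilde K^L}f\,d\lambda$ and similarly on the right; a short computation shows that both scaling factors equal $1-(c-\Delta)/\lambda(\tilde K)$, so the total integral over $E:=E^L\cup E^R$ collapses to a common scalar times $\int_{\tilde K}f\,d\lambda=0$. The equality of the scaling factors also forces $\lambda(E\cap K^L)/\lambda(E)=\lambda(\tilde K^L)/\lambda(\tilde K)=p/q$, while the measure count yields $\lambda(E)=\lambda(K)-c$. The remaining verification that $(p/q)(c-\Delta)$ and $(1-p/q)(c-\Delta)$ lie in the admissible range of Lemma \ref{Lemma:shrinking} follows from $c<\min\{\lambda(K^L),\lambda(K^R)\}$ together with the lower bounds $\lambda(\tilde K^L)\geq\lambda(K^L)-\Delta$ and $\lambda(\tilde K^R)\geq\lambda(K^R)-\Delta$.
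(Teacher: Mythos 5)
Your proposal is correct and follows essentially the same route as the paper: dispose of the degenerate cases via Lemma \ref{Lemma:shrinking}, apply Lemma \ref{Lemma:splitting1} with tolerance $c/2$ to get the rational split, then center $f$ on each half and trim by $(p/q)(c-\Delta)$ and $(1-p/q)(c-\Delta)$ via Lemma \ref{Lemma:shrinking}, with the same verification that the common scaling factor $1-(c-\Delta)/\lambda(\tilde K)$ preserves both the zero integral and the ratio $p/q$. The admissibility check at the end matches the paper's as well.
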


\subsection{Constructing towers of the sets $K_a$}
In this subsection, for a given set $K$ satisfying assumptions of Lemma \ref{Lemma:splitting2} and for given  $\epsilon\in (0,\lambda(K))$, we shall build a measurable set $C$ such that $\lambda(C)\geq \lambda(K) - \epsilon$ which is later used to construct a function $g\in L_\infty(C)$ and a measure preserving transformation mod0 $T$ of $C$ such that $f|_{C} = g\circ T - g$.

Below, we shall use the following notation. Fix a sequence of natural numbers $(m_n)_{n=0}^\infty$ such that $m_n\geq 2$.  For every $n\ge 1$ denote $\mathcal{E}_n = \prod_{j=1}^{n}\{1,...,m_{j-1}\}$. With every element $a\in \mathcal{E}_n$, we shall link the measurable set $K_a$ (a subset of a fixed measurable set $K$) and consider  the collection of sets $\{K_a\}_{a\in \mathcal{E}_n}$ for $n\in \NN$. Further, we denote 
$$C_n = \bigcup_{a\in \mathcal{E}_n}K_a,\quad{\rm and}\quad C = \bigcap_{n=1}^\infty C_n$$ and define the functions
$$f_n = \sum_{a\in \mathcal{E}_n}\frac{1}{\lambda(K_a\cap C)}\int_{K_a \cap C}fd\lambda\cdot \chi_{K_a\cap C}\in L_\infty(C).$$

\begin{lem}\label{Lemma:contruction-of-chains}
 
Suppose that the set $K\subseteq [0,1]$ and the function  
$f$ 
 satisfy the assumptions of Lemma \ref{Lemma:splitting2}. Then for every $\epsilon\in (0,\lambda(K))$ there exists a sequence of natural numbers $(m_n)_{n=0}^\infty$ as above such that the following properties hold for every $n\ge 1$:
	\begin{enumerate}
		\item For $a\in \mathcal{E}_n$ the set $K_a$ is a compact subset of $[0,1]$. For $a_1,a_2\in \mathcal{E}_n$, $a_1\neq a_2$ we have either ${\rm sup} K_{a_1}\leq {\rm inf} K_{a_2}$ or else ${\rm sup} K_{a_2}\leq {\rm inf} K_{a_1}$.
		\item If $a\in \mathcal{E}_{n}$ and $b\in \mathcal{E}_{n+1}$ are such that $a_j = b_j$ for $1\leq j\leq n$, then $K_b\subseteq K_a$.
		\item For $a,b\in \mathcal{E}_n$ we have that  $K_a\cap C$ and $K_b\cap C$ are disjoint whenever $a\not=b$.
		\item For $a,b\in \mathcal{E}_n$ the sets $K_a$ and $K_b$ have the same positive measure $M_n :=\lambda(K_a) = \lambda(K_b)>0$. Moreover, $\lambda(K_a\cap C) = \frac{\lambda(C)}{|\mathcal{E}_n|}$. 
		\item We have $\int_{C_n}f d\lambda = 0$, and furthermore $\int_C fd\lambda =0$
		\item We have $\lambda(C_n) \geq \lambda(K) -  (1-2^{-n})\epsilon>\lambda(K) - \epsilon$ and furthermore $\lambda(C)\geq \lambda(K) - \epsilon$
		\item For every chain $K_{c_1}\supset K_{c_2}\supset ....$ with $c_j \in  \mathcal{E}_j$ we have $\Diam(K_{c_j})\to 0$ as $j\to \infty$.
		\item The set $\{K_a\cap C: a\in \mathcal{E}_n \text{ for some } n\geq 1\}$ generates the Borel $\sigma$-algebra $\mathcal{B}(C)$.
		\item We have that $||f_j-f|_{C}||_\infty \to 0$ as $j\to \infty$.
	\end{enumerate} 
	\begin{proof}
		We will construct the sequence  $(m_n)_{n=0}^\infty$ and the sets $\{K_a\}_{a\in \mathcal{E}_n}$ inductively. For convenience we first set $\mathcal{E}_0 = \{\varepsilon\}$ where $\varepsilon$ denotes the empty tuple, and we define $K_{\varepsilon} = K$. We see that properties (1)-(6) hold for $n=0$. Now fix $n\geq 0$ and suppose that the sets $K_a$ with $a\in \mathcal{E}_n$ have been defined so that properties (1)-(6) hold for this $n$.

Fix $a\in \mathcal{E}_n$ and set 
$$K_a^L := K_a \cap [\inf K_a,\frac{\inf K_a + \sup K_a}{2}]\quad{\rm and}\quad K_a^R :=  K_a \cap [\frac{\inf K_a + \sup K_a}{2},\sup K_a].$$
Observe that ${\rm diam }(K_a^L)\leq \frac12 {\rm diam }(K_a)$ and ${\rm diam }(K_a^R)\leq \frac12 {\rm diam }(K_a)$.
 Choose $\epsilon_n>0$ with
		\begin{align}
		\epsilon_n&< \min\{\frac{\epsilon}{2^n |\mathcal{E}_n|}, M_n\}\\
		\epsilon_n &< \min\{\lambda(K_c^L),\lambda(K_c^R)\}
		\text{ for } c\in \mathcal{E}_n \text{ for which } \lambda(K_c^L),\lambda(K_c^R)>0
		\end{align} 
		Further set 
$$h_a =  f - \frac{1}{\lambda(K_a)}\int_{K_a}fd\lambda\quad{\rm and}\quad \kappa_a = \kappa - \frac{1}{\lambda(K_a)}\int_{K_a}fd\lambda.
$$
Then $h_a$ is a continuous function on $K$ with $\int_{K_a}h_a = 0$ and such that $\lambda(h_a^{-1}(\{y\})) = 0$ for all $y<\kappa_a$ and so that $h_a|_{\{h_a\geq \kappa_a\}} = \kappa_a$ is constant.
		Because of this, and by the choice of $\epsilon_n$, we can now apply Lemma \ref{Lemma:splitting2}  to the set $K_a = K_a^L\cup K_a^R$, the function $h_a$ and the scalar $\epsilon_n$ to obtain a compact subset $\widetilde{K}_a\subseteq K_a$ of measure 
$\lambda(\widetilde{K}_a)= \lambda(K_a) - \epsilon_n$ so that $\int_{\widetilde{K}_a}h_a = 0$ and so that if we set
		$\widetilde{K}_a^L = \widetilde{K}_a \cap K_a^L$ and 
		$\widetilde{K}_a^R = \widetilde{K}_a \cap K_a^R$ we have
		$\frac{\lambda(\widetilde{K}_a^L)}{\lambda(\widetilde{K}_a)} = \frac{p_a}{q_a}$ for some integer $p_a\ge 0$ and positive integer $q_a$.

		Now, set 
$$m_{n} = 2\prod_{a\in \mathcal{E}_n}q_a,\quad k_a=\frac{m_np_a}{q_a}.$$
We now select points 
$$x_a^0<x_a^1<\dots<x_a^{k_a}=\frac{\inf K_a + \sup K_a}{2}<\dots<x_a^{m_n}
$$
in $K_a$ so that for $1\leq i\leq m_n$ the sets 
$$K_a^i:=\widetilde{K}_a\cap [x_a^{i-1},x_a^{i}]$$ all have equal measure 
$$\lambda(K_a^i) = \frac{\lambda(\widetilde{K}_a)}{m_n} = \frac{\lambda(K_a) - \epsilon_n}{m_n} = \frac{M_n - \epsilon_n}{m_n}$$  and moreover  
$$K_a^i\subset \widetilde{K}_a^L,\quad \forall i\leq k_a$$ and 
$$
K_a^i\subset \widetilde{K}_a^R,\quad \forall k_a<i\leq m_n.
$$ 
		
		Now if $b = (a_1,a_2,...,a_n,i)$ with $1\leq i\leq m_{n}$ then we define $K_b=K_a^i$. 
		
Observe that the definitions of $K_a^i$ and that of $K_b$ guarantee that the assertions (1) and (2) of Lemma 	\ref{Lemma:contruction-of-chains} hold.

%		as  \[K_b = \begin{cases}
%		K_a^i\cap K_a^L & \text{if } \lambda(K_a^i\setminus K_a^L) = 0\\
%		K_a^i\cap K_a^R & \text{if } \lambda(K_a^i\setminus K_a^R) = 0\\
%		K_a^i & \text{otherwise}		
%		\end{cases}\] 
%		Note that since $\lambda(K_a^i)>0$ and since $K_a^L, K_a^R$ are {\color{red} essentially?} disjoint, we can not have both $ \lambda(K_a^i\setminus K_a^L) = 0$ and $\lambda(K_a^i\setminus K_a^R) = 0$. 
This completes the construction.\\

		Before we show that all stated properties hold, we first give an intuitive idea of what we have done.
		We had first shrunk the set $K_a$ to some compact subset $\widetilde{K}_a$. This has been done in such a way as to preserve the equality
\begin{equation}\label{equality of average}
\frac{1}{\lambda (K_a)}\int_{K_a}fd\lambda		=\frac{1}{\lambda (\widetilde{K}_a)}\int_{\widetilde{K}_a}fd\lambda
\end{equation}				
and keep the ratio 
$$
\frac{\lambda(K_a^L\cap \widetilde{K}_a)}{\lambda(\widetilde{K}_a)}
$$
rational. Thereafter we could choose $m_n$, and divide the set $K_a$ from left to right in sets $K_a^i$. 
Due the choice of $m_n$,  the subsets $K_a^i$  are either contained in $K_a^L$ or in $K_a^R$. Now, for $b = (b_1,..,b_n,i)$ we have defined $K_b$ equal to some $K_a^i$. 
%Moreover, since either $K_a^i\setminus K_a^L$ or $K_a^i\setminus K_a^R$ actually is a null set, 
%we always have that $K_b = K_a^i\cap K_a^L$ or $K_b = K_a^i\cap K_a^R$. 
%Hence $K_b$ is either fully contained in $K_a^L$ or in $K_a^R$. 
This guarantees that 
${\rm diam}(K_b)\leq \frac12{\rm diam}(K_a)$ and thus the diameter of the tower goes to zero, which is the assertion (7).
% Namely, the points $x_a^i$ are not distributed equidistantly, but distributed {\color{red} according }to the set $K_a$.

It is also important to emphasize that Lemma \ref{Lemma:splitting2} guarantees that the sets $\widetilde{K}_a^L$ and $\widetilde{K}_a^R$ do not contain ${\rm inf}K_a$ and ${\rm sup}K_a$.		Therefore, 
$$
{\rm inf}K_a,\ {\rm sup} K_a\notin 
\cup_{b\in \mathcal{E}_{n+1}} K_b,
$$
and therefore, the family $\{K_a\cap C_{n+1}, a\in \mathcal{E}_n\}$ consists of pairwise disjoint sets. This observation guarantees that the assertion (3) of Lemma \ref{Lemma:contruction-of-chains} holds.

%	 that the set 
%		$\{K_a\cap C: a\in \mathcal{E}_n, n\in\NN\}$ generates the Borel $\sigma$-algebra on $C$, and that for the functions $f_n$ we have $||f_n-f|_{C}||_\infty \to 0$.

		\begin{figure}[!h]\label{Fig:visualisation-subset-structure}
			\title{{Visualisation of subset structure}}\par\medskip
			\begin{tikzpicture}	
			\node (A) at (0,0) {$K_a = K_a^L\cup K_a^R$};
			\node (B) at (0,-1) {$\widetilde{K}_a = \widetilde{K}_a^L\cup \widetilde{K}_a^R$};		
			
			\node (C1) at (-3,-3) {$K_{(a_1,..,a_n)}^1$};
			\node (C2) at (-1,-3) {...};
			\node (C3) at (1,-3) {...};
			\node (C4) at (3,-3) {$K_{(a_1,..,a_n)}^{m_n}$};
			
			\node (D1) at (-3,-4) {$K_{(a_1,..,a_n,1)}$};
			\node (D4) at (3,-4) {$K_{(a_1,..,a_n,m_n)}$};

			%\node (C31) at (1,-6) {...};
			%\node (C32) at (2,-7) {...};
			%\node (C33) at (3,-8) {...};

			\path [-] (A) edge node[left,blue,thick] {} (B);	
			
			\path [-] (B) edge node[left,blue,thick] {} (C1);	
			\path [-] (B) edge node[left,blue,thick] {} (C2);	
			\path [-] (B) edge node[left,blue,thick] {} (C3);	
			\path [-] (B) edge node[left,blue,thick] {} (C4);	
			
			\path [-] (C1) edge node[left,blue,thick] {} (D1);	
			\path [-] (C4) edge node[left,blue,thick] {} (D4);					
			\end{tikzpicture}
			
			\caption{Visualisation of the subdivision of $K_a$, for some $a=(a_1,..,a_n)\in \mathcal{E}_n$, in subsets. The lines mean that the lower set is included in the upper set. A subset $K_b\subseteq K_a$ with $b\in \mathcal{E}_{n+1}$ is set equal, to the set $K_a^i$ for some $1\leq i\leq m_n$. Further, $K_b$ is either fully contained in $K_a^L$ or in $K_a^R$, hence its diameter is less than half the diameter of $K_a$. This ensures that the diameter of elements of $n$-level in the tower tends to zero as $n\to \infty$.}
		\end{figure}
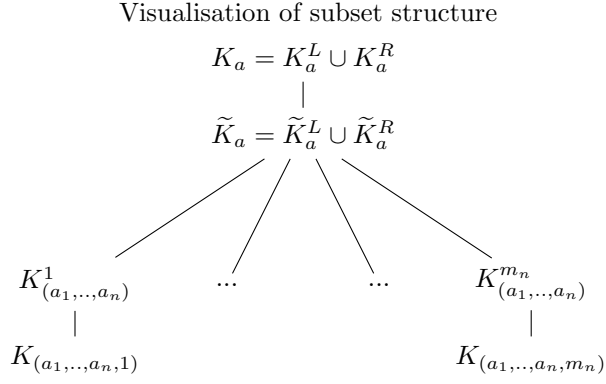	
\medskip 

Now we shall verify the remaining assertions of Lemma  \ref{Lemma:contruction-of-chains} (recall that we have verified (1)- (3) and (7) above).
	
%(1) By construction the sets $K_b$ that were defined are compact.\\

%(2)  {\color{red} By construction.?}\\

%(3) The statement holds for $n=0$. Now choose $n\in \NN_0$ and suppose we have that $K_a\cap C$ and $K_b\cap C$ are disjoint for all $a,b\in \mathcal{E}_n$ with $a\not=b$. Let $c,d\in \mathcal{E}_{n+1}$ with $c\not=d$. Then $K_c\subseteq K_a^i$ and $K_d\subseteq K_b^j$ for some $a,b\in \mathcal{E}_n$ and $1\leq i,j\leq m_n$. Now if $a\not=b$ we already have that $K_c\cap C$ and $K_d\cap C$ are disjoint. Hence assume that $a=b$. Then, since $c\not=d$ we have $i\not=j$. Hence $K_c\cap K_d\subseteq \{x_a^1,...,x_a^{m-1}\}$. Now, since in the construction of the next layer, in $\widetilde{K}_c$ and $\widetilde{K}_d$, the endpoints $x_a^l$ are left out, we have that $x_a^l\not\in C$ for $l=0,..,m_n$. Hence
%		$K_c\cap C$ and $K_d\cap C$ are disjoint. Now by induction, this holds for all $n\in\NN$.\\
		
		(4) Fix $n\geq 0$ and observe that all sets $\{K_g\}_{g\in \mathcal{E}_n}$ have equal positive measure $M_n$. Choose $a,b\in \mathcal{E}_n$ and $c,d\in \mathcal{E}_{n+1}$ such that $K_c\subseteq K_a$ and $K_d\subseteq K_b$.
		We then have 
$$\lambda(K_c)  = \frac{M_n - \epsilon_n}{m_n}  = \lambda(K_d).$$
		Further, since $\epsilon_n<M_n$ this measure is positive.
		Hence,  by induction all sets $\{K_g\}_{g\in \mathcal{E}_{n+1}}$ have equal positive measure. 
Finally, for any $a\in \mathcal{E}_n$, we have 
$$\lambda(K_a\cap C) = \lim\limits_{N\to\infty}\lambda(K_a\cap C_N) = \lim\limits_{N\to\infty}\frac{|\mathcal{E}_N|}{|\mathcal{E}_n|}M_N =
		\lim\limits_{N\to\infty}\frac{\lambda(C_N)}{|\mathcal{E}_n|} = 
		\frac{\lambda(C)}{|\mathcal{E}_n|}.$$\\
		
		(5) For $n=0$, we have $\int_{C_0}fd\lambda = \int_{K}fd\lambda = 0$. Now choosing integer $n\ge 0$ and assuming that $\int_{C_n}fd\lambda = 0$,  we have (the second equality below follows from equality \eqref{equality of average} above):
		
		\begin{align*}
		\int_{C_{n+1}}fd\lambda &=\sum_{a\in \mathcal{E}_{n}}\int_{\widetilde{K}_a}fd\lambda\\
		&=\sum_{a\in \mathcal{E}_n} \frac{\lambda(\widetilde{K}_a)}{\lambda(K_a)}\int_{K_a}fd\lambda d\lambda\\
		&=\sum_{a\in \mathcal{E}_n} \frac{\lambda(K_a) - \epsilon_n}{\lambda(K_a)}\int_{K_a}fd\lambda d\lambda\\
		&=\frac{M_n - \epsilon_n}{M_n}\sum_{a\in \mathcal{E}_n} \int_{K_a}fd\lambda d\lambda = \frac{M_n - \epsilon_n}{M_n}\int_{C_n}fd\lambda d\lambda = 0
		\end{align*} 
		Hence, inductively we have $\int_{C_n}fd\lambda = 0$ for all $n\in \NN$.
		Now moreover we have that $\left|\int_{C}fd\lambda\right| \leq  \left|\int_{C_n}fd\lambda\right| + \lambda(C_n\setminus C)||f||_\infty = \lambda(C_n\setminus C)||f||_\infty \to 0$ as $n\to \infty$.\\
		
		(6) We have $\lambda(C_0) = \lambda(K)$. Now, choose $n\geq 0$ and assume $\lambda(C_n) \geq \lambda(K) - (1-2^{-n})\epsilon$. Then for $a\in \mathcal{E}_n$ we have $\lambda(K_a)\geq \frac{\lambda(K) - (1 - 2^{-n})\epsilon}{|\mathcal{E}_n|}$. Now for $K_b\subseteq K_a$ with $b\in \mathcal{E}_{n+1}$ we have (due to the assumption on $\epsilon_n$)
\begin{align*}
M_{n+1} &= \lambda(K_b) \cr &= \frac{\lambda(\widetilde{K}_a)}{m_n} =
		\frac{\lambda(K_a) - \epsilon_n}{m_n} \geq \frac{\lambda(K_a)}{m_{n}} -\frac{\epsilon}{2^{n+1}|\mathcal{E}_n|m_{n}}\geq \frac{\lambda(K)}{m_n|\mathcal{E}_n|} - \frac{(1 - 2^{-n})\epsilon}{m_n|\mathcal{E}_n|} - \frac{2^{-(n+1)}\epsilon}{m_n|\mathcal{E}_n|} \cr &= \frac{\lambda(K) - (1-2^{-(n+1)})\epsilon}{|\mathcal{E}_{n+1}|}.
\end{align*}
		Hence 
$$\lambda(C_{n+1})= |\mathcal{E}_{n+1}|M_{n+1} \geq \lambda(K) - (1-2^{-(n+1)})\epsilon.$$ 
This proves the first claim by induction.
Furthermore, we have also obtained that
$$\lambda(C) = \inf_{n\in\NN} \lambda(C_n)\geq \lambda(K) - \epsilon.$$\
\
%		(7) Let $a\in \mathcal{E}_n$, $b\in \mathcal{E}_{n+1}$ and $1\leq i\leq m_n$ such that $K_b\subseteq K_a^i$. We shall show that $\Diam(K_b)\leq \frac{1}{2}\Diam(K_a)$. This fact ensures that the diameter of chains $K_{c_1}\supset K_{c_2}\supset \dots $ with $c_n\in \mathcal{E}_n$ goes to $0$. 
%		By construction we have that 
%$$\frac{\lambda(\widetilde{K}_a^L)}{\lambda(K_a)} = \frac{p_a}{q_a}$$ which is 
%{\color{red} a multiple of $\frac{1}{m_n}$?}. 
%Now, if $K_a^i\subseteq \widetilde{K}_a^L$ or $K_a^i\subseteq \widetilde{K}_a^R$, then already $\Diam(K_b)\leq \frac{1}{2}\Diam(K_a)$. We can thus assume that $K_a^i\cap \widetilde{K}_a^L \not=\emptyset$ and $K_a^i\cap \widetilde{K}_a^R\not=\emptyset$. This means that $x_a^{i-1}\in \widetilde{K}_a^L$ and $x_a^i\in \widetilde{K}_a^R$. 	Now, we have 
%$$\frac{i-1}{m_{n}} = \frac{\lambda(\widetilde{K}_a\cap [0,x_a^{i-1}])}{\lambda(\widetilde{K}_a)} \leq \frac{\lambda(\widetilde{K}_a^{L})}{\lambda(\widetilde{K}_a)} \leq \frac{\lambda(\widetilde{K}_a\cap [0,x_a^{i}])}{\lambda(\widetilde{K}_a)} = \frac{i}{m_{n}}
%$$ 
%so that 
%$$\frac{\lambda(K_a^L\cap K_a^i)}{\lambda(\widetilde{K}_a)} = \frac{\lambda(\widetilde{K}_a^{L}\cap [x_a^{i-1},x_a^i])}{\lambda(\widetilde{K}_a)}
%$$ is either $0$ or else $\frac{1}{m_{n}}$.
%		Hence, all measure of $K_a^i$ {\color{red}is either contained} in $K_a^{L}$ or in $K_a^{R}$. Now this means that $K_b$ is set equal to either $K_a^i\cap K_a^L$ or $K_a^i\cap K_a^R$. Hence $K_b$ is either fully contained in $K_a^L$ or in $K_a^R$. Hence $\Diam(K_b)\leq \frac{1}{2}\Diam(K_a)$.\\
		
		(8)  We show that $\mathcal{A} = \{K_a\cap C: a\in \mathcal{E}_n,\ n\ge 0\}$ generates $\mathcal{B}(C)$. 
First of all, since for $n\ge 0$ the sets 
$\{K_a \cap C\}_{a\in \mathcal{E}_n}$ are compact, they are contained in the Borel $\sigma$-algebra $\mathcal{B}(C)$. Now choose $u\in [0,1]$, we show that $[0,u)\cap C$ is generated by $\mathcal{A}$. Let $x\in [0,u)\cap C$.
We have
\begin{equation}\label{diam}
{\rm diam}(K_a)\to 0,\quad a\in {\mathcal E}_n,\quad n\to \infty.
\end{equation}
Hence, there exist $N\in \NN$ and  $a_x\in \mathcal{E}_N$ such that $K_{a_x}$ contains $x$ and such that $\sup K_{a_x}<u$. Now let $\mathcal{A}_0 := \{K_{a_x}\cap C: x\in C\cap [0,u)\}\subseteq \mathcal{A}$ which is countable, due to the fact that every ${\mathcal E}_n$ is finite. Now $\bigcup_{A\in \mathcal{A}_0}A = C\cap [0,u)$ is generated by $\mathcal{A}$. Now, since $\{[0,u)\cap C: u\in [0,1]\}$ generates the Borel $\sigma$-algebra $\mathcal{B}(C)$, we also have that $\mathcal{A}$ generates $\mathcal{B}(C)$.\\
		
		(9) Since $f$ is continuous on $K$ and since $K$ is compact, we have that $f$ is uniformly continuous on $K$. Hence, for $\epsilon'>0$ we can find a $\delta>0$ such that $|f(x) - f(y)|<\epsilon'$ whenever $|x-y|<\delta$. Appealing to \eqref{diam}, 
		 we can find a $N\in \NN$ such for $n\geq N$ we have ${\rm diam}(K_a)<\delta$ for all $a\in \mathcal{E}_n$. Hence we have $|f(x) - f(y)|< \epsilon'$ for $x,y\in K_a$ and $a\in \mathcal{E}_n$. Now for $x\in K_a\cap C$ we have 
$$|f_n(x) - f(x)|= \left|\frac{1}{\lambda(K_a\cap C)}\int_{K_a\cap C}f(t) - f(x)dt\right| 	\leq \frac{1}{\lambda(K_a\cap C)}\int_{K_a\cap C}|f(t) - f(x)|dt
		\leq \epsilon'.
$$ Now, this holds for all $x\in C$ and $\epsilon'>0$, thus $\|f_n - f|_{C}\|_\infty \to 0$ as $n\to \infty$.
	\end{proof}
\end{lem}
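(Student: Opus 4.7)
The plan is to build the tower $\{K_a\}$ by induction on $n$. At the base level I take $\mathcal{E}_0 = \{\varepsilon\}$ and $K_\varepsilon = K$, which trivially satisfies properties (1)--(6) (and (7) is vacuous at one level). For the inductive step, assume the level-$n$ collection has been constructed with common measure $M_n$ and $\int_{C_n} f\, d\lambda = 0$. For each $a \in \mathcal{E}_n$ I split $K_a$ at the midpoint of $[\inf K_a, \sup K_a]$ into $K_a^L$ and $K_a^R$, then apply Lemma \ref{Lemma:splitting2} to the recentered function $h_a = f|_{K_a} - \tfrac{1}{\lambda(K_a)} \int_{K_a} f\, d\lambda$ (which still satisfies (i)--(ii) with the shifted cap $\kappa_a$) to obtain a compact $\widetilde{K}_a \subseteq K_a$ with $\lambda(\widetilde{K}_a) = M_n - \epsilon_n$, with $\int_{\widetilde{K}_a} h_a\, d\lambda = 0$, and with a rational ratio $\lambda(\widetilde{K}_a \cap K_a^L)/\lambda(\widetilde{K}_a) = p_a/q_a$. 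Here $\epsilon_n$ must be chosen below $\epsilon/(2^n |\mathcal{E}_n|)$ (so that the error bound in (6) telescopes) and below $\min(\lambda(K_a^L), \lambda(K_a^R))$ whenever both are positive (so that Lemma \ref{Lemma:splitting2} is applicable to every $a$).

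Next I pick $m_n = 2 \prod_{a \in \mathcal{E}_n} q_a$, which makes $k_a := m_n p_a/q_a$ a positive integer for every $a$. Inside each $\widetilde{K}_a$ the cumulative distribution $x \mapsto \lambda(\widetilde{K}_a \cap [0,x])$ is continuous, so I can pick points $x_a^0 < \cdots < x_a^{k_a} = \tfrac{\inf K_a + \sup K_a}{2} < \cdots < x_a^{m_n}$ in $K_a$ such that the consecutive slices $K_a^i := \widetilde{K}_a \cap [x_a^{i-1}, x_a^i]$ all have measure $(M_n - \epsilon_n)/m_n$. By construction, the first $k_a$ slices lie in $\widetilde{K}_a \cap K_a^L$ and the remaining $m_n - k_a$ slices lie in $\widetilde{K}_a \cap K_a^R$. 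Setting $K_{(a_1,\dots,a_n,i)} := K_a^i$ yields the level-$(n+1)$ collection.

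The verifications fall into two groups. Properties (1)--(4) are immediate from the construction, with (3) using the fact that Lemma \ref{Lemma:splitting2} places $\widetilde{K}_a$ strictly inside $(\inf K_a, \sup K_a)$, so the endpoints of the parent $K_a$ never belong to any child and the disjointness survives intersection with $C$. Property (5) follows from the identity $\int_{\widetilde{K}_a} f\, d\lambda = \tfrac{\lambda(\widetilde{K}_a)}{\lambda(K_a)} \int_{K_a} f\, d\lambda$, which is exactly what vanishing of $\int h_a$ encodes; summing over $a \in \mathcal{E}_n$ and using that the shrinkage factor $(M_n - \epsilon_n)/M_n$ is uniform in $a$ yields $\int_{C_{n+1}} f\, d\lambda = 0$, and the extension to $C$ is a trivial measure-difference bound. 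Property (6) follows by induction from the telescoping $\sum_n \epsilon_n |\mathcal{E}_n| \leq \sum_n 2^{-n}\epsilon$. Property (7) follows because every child sits entirely inside $K_a^L$ or $K_a^R$, halving the diameter. Properties (8) and (9) then follow from (7): the shrinking-diameter family of compact sets $\{K_a \cap C\}$ separates points and hence generates $\mathcal{B}(C)$, while uniform continuity of $f$ on $K$ combined with the vanishing diameters gives uniform convergence of the conditional averages $f_n$ to $f|_C$.

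The main obstacle, in my view, is arranging three constraints simultaneously at each inductive step: first, preserving the running average of $f$ so that the mean-zero property (5) propagates from $K_a$ to $\widetilde{K}_a$; second, making the $L/R$ proportion of $\widetilde{K}_a$ rational so that the equal-measure subdivision into $m_n$ consecutive pieces is compatible with the midpoint split (this is what drives (7) and ultimately (8) and (9)); and third, keeping all child sets of common measure across different $a$ (needed for (4)), which forces a uniform choice of $\epsilon_n$ and a common $m_n$ across the entire level. Lemma \ref{Lemma:splitting2} is precisely engineered to deliver the first two simultaneously, while the factor $2 \prod_a q_a$ in $m_n$ handles the third. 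Once these three ingredients are assembled, the remaining verifications are bookkeeping with a geometric error budget.
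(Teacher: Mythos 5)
Your proposal follows essentially the same route as the paper's proof: the same inductive construction via the midpoint split, the recentered function $h_a$ fed into Lemma \ref{Lemma:splitting2} to get the rational ratio $p_a/q_a$, the common subdivision count $m_n = 2\prod_a q_a$, the endpoint-avoidance argument for disjointness after intersecting with $C$, and the same telescoping error budget and diameter-halving arguments for the remaining properties. The only quibble is that $k_a = m_n p_a/q_a$ is a nonnegative (not necessarily positive) integer since $p_a$ may be zero, but this does not affect the argument.
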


Now, we shall pass to the construction of the function $g$ and  measure preserving transformation mod0 $T$ of the set $C$ constructed in Lemma \ref{Lemma:contruction-of-chains}. 

\begin{lem}\label{Lemma:solving-equation-on-subset}
Suppose that the set $K\subseteq [0,1]$ and the function  
$f$ satisfy the assumptions of Lemma \ref{Lemma:splitting2} (and Lemma \ref{Lemma:contruction-of-chains}). Take $\epsilon\in (0,\lambda(K))$ and take the set $C$ from Lemma \ref{Lemma:contruction-of-chains}. Then we can find a function $g\in L_\infty(C)$ with $||g||_\infty \leq (1+\epsilon)||f||_\infty$ and a measure preserving transformation mod0 $T$ of $C$ such that $f|_{C} = g\circ T - g$.
\end{lem}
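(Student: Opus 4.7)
The plan is to identify $C$ up to a null set with the path space of a Bratteli diagram via the tower from Lemma \ref{Lemma:contruction-of-chains}, build $T$ as a Vershik-type odometer whose edge orderings are chosen via Lemma \ref{sum} (or Lemma \ref{Prelim:permutations-kwapien}) so that the orbital partial sums $S_k f := \sum_{j=0}^{k-1} f \circ T^j$ are uniformly bounded by $(1+\epsilon)\|f\|_\infty$, and then extract $g$ by the standard $L_\infty$-coboundary construction.

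Setting $\mathcal{F}_n := \sigma(\{K_a \cap C : a \in \mathcal{E}_n\})$, properties (4) and (5) identify $f_n$ as the conditional expectation $\EE[f|_C \mid \mathcal{F}_n]$ and give $f_0 \equiv 0$; property (9) yields the uniformly convergent decomposition $f|_C = \sum_{n\ge 1} h_n$ with $h_n := f_n - f_{n-1}$ piecewise constant on level-$n$ atoms. Within each parent atom $K_a \cap C$ for $a\in\mathcal{E}_{n-1}$, the $m_{n-1}$ child values of $h_n$ sum to zero, so Lemma \ref{sum} produces a permutation $\sigma_{n,a}$ of $\{1,\ldots,m_{n-1}\}$ along which all partial sums of those child values are bounded by $\|h_n\|_\infty$. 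Equip the tower with these edge orderings and let $T$ be the associated Vershik map; by property (8) and Theorem \ref{Prelim:isomorphism}, $T$ is a measure preserving transformation mod 0 of $C$. Since the orbit segment of length $|\mathcal{E}_n|$ visits each level-$n$ atom exactly once and $h_n$ is mean zero, $S_{|\mathcal{E}_n|} h_n = 0$, so $S_k h_n$ is periodic in $k$ with period $|\mathcal{E}_n|$ and bounded by a small multiple of $\|h_n\|_\infty$ via the $\sigma_{n,a}$-orderings. Summing over $n$, with $\|h_1\|_\infty \leq \|f\|_\infty$ (contractivity of conditional expectation) and $\sum_{n\geq 2}\|h_n\|_\infty \leq \epsilon\|f\|_\infty$ arranged by refining the tower (using larger $m_n$, so that the diameters of level-$n$ atoms decay fast enough relative to the modulus of continuity of the continuous $f$) yields $\sup_k \|S_k f\|_\infty \leq (1+\epsilon)\|f\|_\infty$. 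The desired $g$ with $\|g\|_\infty \leq (1+\epsilon)\|f\|_\infty$ and $f|_C = g \circ T - g$ then follows from the standard fact that $f$ is an $L_\infty$-coboundary of $T$ whenever its orbital sums are uniformly bounded (for instance via a Banach limit in $k$).

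The main obstacle will be proving the uniform orbital-sum bound with the sharp constant $(1+\epsilon)$. The Vershik map's carry behavior spreads the $m_{n-1}$ children of any given parent across non-consecutive positions of an orbit — specifically, consecutive children of one parent are separated by $|\mathcal{E}_{n-1}|$ steps — so the within-parent partial-sum bounds from Lemma \ref{sum} do not directly translate into a bound on $\sup_k |S_k h_n|$; one must instead control partial sums across different parents simultaneously, which is precisely the setting of Lemma \ref{Prelim:permutations-kwapien}. Coordinating these cross-level estimates into the single bound $(1+\epsilon)\|f\|_\infty$, while simultaneously refining the tower so that the tail $\sum_{n\geq 2}\|h_n\|_\infty$ is absorbed into $\epsilon\|f\|_\infty$, is the technical heart of the proof.
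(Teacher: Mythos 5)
Your overall strategy is the paper's: decompose $f|_C$ telescopically along the tower into piecewise-constant increments $h_n$, use Lemma \ref{sum} and Lemma \ref{Prelim:permutations-kwapien} to order the atoms so that partial sums stay controlled, and obtain $T$ as a limit of the resulting cyclic rearrangements. But there are two genuine gaps. First, the step you yourself flag as ``the technical heart'' is precisely the content of the paper's proof and cannot be deferred: given the cyclic order $T_k$ on the level-$k$ atoms $I_1,\dots,I_n$, one forms the $n\times m$ matrix $(a_{i,j})$ of values of $h_{k+1}$ on the children $I_{i,j}$, applies Lemma \ref{Prelim:permutations-kwapien} to get row permutations making all column partial sums at most $2\|h_{k+1}\|_\infty$, then applies Lemma \ref{sum} to the column totals $b_j$, and defines $T_{k+1}$ to scan the reordered matrix column by column; this yields the anchored bound $|\sum_{r=0}^{l}h_{k+1}(T_{k+1}^r t)|\le 4\|h_{k+1}\|_\infty$ for $t$ in a distinguished first atom and hence $g_{k+1}$ with $\|g_{k+1}\|_\infty\le 4\|h_{k+1}\|_\infty$. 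Without carrying out this two-stage permutation argument your proposal establishes no bound at all. (Also, you cannot ``refine the tower by using larger $m_n$'': the tower is handed to you by Lemma \ref{Lemma:contruction-of-chains}. The correct move, as in the paper, is to pass to a subsequence of levels $n_k$ with $\|f_{n_k}-f|_C\|_\infty\le 2^{-k-3}\epsilon\|f\|_\infty$, so that $\sum_{k\ge 1}4\|h_k\|_\infty\le\epsilon\|f\|_\infty$.)

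Second, your exit route --- prove $\sup_k\|S_kf\|_\infty\le(1+\epsilon)\|f\|_\infty$ and invoke the bounded-orbital-sums criterion --- cannot deliver the constant $1+\epsilon$. Already for the leading term $f_{n_0}$, with values $a_{\sigma(1)},\dots,a_{\sigma(m)}$ arranged so that the anchored partial sums $P_j$ lie in $[-\|f\|_\infty,\|f\|_\infty]$, an orbital sum started at an arbitrary atom is a difference $P_j-P_i$ and can equal $2\|f\|_\infty$ (for instance the cyclic order of values $-1,1,1,-1$ satisfies the conclusion of Lemma \ref{sum} yet contains a window summing to $2$). So $\sup_k\|S_kf\|_\infty$ is in general close to $2\|f\|_\infty$, and a Banach-limit or weak-$*$ Ces\`aro $g$ extracted from the orbital sums is only bounded by that larger quantity; applying a Banach limit pointwise also raises a measurability issue. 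The paper avoids both problems by defining each $g_k$ directly as the one-sided partial sums anchored at a distinguished atom on which $g_k=0$, giving $\|g_0\|_\infty\le\|f\|_\infty$ and $\sum_{k\ge1}\|g_k\|_\infty\le\epsilon\|f\|_\infty$, and then verifying separately that $T=\lim_k T_k$ is a measure preserving bijection mod0 (via Theorem \ref{Prelim:isomorphism}, the nesting property, and assertions (7) and (8) of Lemma \ref{Lemma:contruction-of-chains}) and that $g_k\circ T-g_k=g_k\circ T_k-g_k=h_k$.
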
 
\begin{proof} 
 
We shall use notation introduced at the beginning of this subsection and in the formulation of Lemma \ref{Lemma:contruction-of-chains}. 
We now let 
$$v_n:\mathcal{E}_n\to \{1,...,|\mathcal{E}_n|\}$$ 
be the function that arranges the elements in 
$\mathcal{E}_n$ in lexicographical order.
	Further, for $i\in \{1,..,|\mathcal{E}_n|\}$ let 
	$$I_i^n = K_{v_n^{-1}(i)}\cap C$$
	which is compact (see assertion (1) in Lemma \ref{Lemma:contruction-of-chains}).
Since $\|f_n - f|_{C}\|_\infty \to 0$ as $n\to \infty$ (see assertion (9) in Lemma \ref{Lemma:contruction-of-chains}), 
it follows that there exists a sequence $(n_k)_{k\geq 0}$ of natural numbers such that
	for $n\geq n_k$ we have
$$\|f_n-f|_{C}\|_\infty 
\leq 2^{-k-3}\epsilon||f||_\infty.
$$
Setting, 
$$h_k = f_{n_k} - f_{n_{k-1}},\quad{\rm so\ that}\quad 
\|h_k\|_\infty\leq 2^{-k-2}\epsilon\|f\|_\infty,\quad k\ge 1$$ we  have 
$$f = f_{n_0} + \sum_{k=1}^\infty h_k.$$

Now, for $f_{n_0}$ let us denote by $a_i$ the value of $f_{n_0}$ taken on $I_i^{n_0}$ for 
$1\leq i\leq |\mathcal{E}_{n_0}|$. 
As $\int_{C}fd\lambda =0$ we have 
$\sum_{i=1}^{|\mathcal{E}_{n_0}|} a_i = 0$ so that we can use Lemma \ref{sum} to obtain a cyclic permutation 
$\sigma$ of $\{1,..,|\mathcal{E}_{n_0}|\}$ so that 
$\left|\sum_{i=1}^{m}a_{\sigma(i)}\right|\leq \max\{a_i: 1\leq i\leq |\mathcal{E}_{n_0}|\} \leq \|f_{n_0}\|_\infty$ for $0\leq m\leq |\mathcal{E}_{n_0}|$. 
Now, denote by $T_0$ the measure preserving transformation mod0 of $C$ sending $I_{\sigma(i)}^{n_0}$ to 
$I_{\sigma(i+1)}^{n_0}$ for 
$1\leq i\leq |\mathcal{E}_{n_0}| -1$ and
 sending $I_{\sigma(|\mathcal{E}_{n_0}|)}^{n_0}$ to $I_{\sigma(1)}^{n_0}$. 
Such measure preserving transformation mod0 exists by Theorem \ref{Prelim:isomorphism}, since all sets $I_{i}^{n_0}$ for $i=1,...,|\mathcal{E}_n|$ have equal measure. We now denote by $g_0:C\to \RR$ the function, taking on $I_{\sigma(l)}$ the value $\sum_{i=1}^{l-1}a_{\sigma(i)}$ for $l=2,...,|\mathcal{E}_{n_0}|$ and taking value $0$ on 
the interval $I_{\sigma(1)}$. 
Then $\|g_0\|_\infty\leq\|f_{n_0}\|_\infty\leq\|f\|_\infty$ and for $l=2,...,|\mathcal{E}_n|$ and $t\in I_{\sigma(l)}$ we have $g_0(T_0(t)) - g_0(t) = \sum_{i=1}^{l}a_{\sigma(i)} - \sum_{i=1}^{l-1}a_{\sigma(i)} = a_{\sigma(l)} = f_{n_0}(t)$. When $l=1$ and $t\in I_{\sigma(1)}$, we have  $g_0(T_0(t)) - g_0(t) = \sum _{i=1}^1 a_{\sigma(i)} -0=a_{\sigma(1)}=f_{n_0}(t)$.\\

%Further we have for $t\in I_{\sigma(|\mathcal{E}_n|)}$ that $g_0(T_0(t)) - g_0(t) = 0 - \sum_{i=1}^{|\mathcal{E}_n| -1}a_{\sigma(i)} = a_{\sigma(1)} = f_{n_0}(t)$. Hence $g_0(T_0(t)) - g_0(t) = f_{n_0}(t)$ for all $t\in C$.
%	Moreover $||g_0||_\infty\leq ||f_{n_0}||_\infty\leq ||f||_\infty$.\\
%	
	
Using the same argument as in \cite{Kwapien},  for each $k\ge 1$, we denote $J_k = \{I_i^{n_k}: 1\leq i\leq |\mathcal{E}_{n_k}|\}$, and define a sequence $\{T_k\}_{k=1}^\infty$ of measure preserving transformations mod0 $T_k$ of $C$  and functions $\{g_k\}_{k=1}^\infty $ with   $g_k\in L_\infty(C)$ satisfying the following:
	\begin{enumerate}[(i)]
		\item $T_k$ is a cyclic rearrangement of the sets of $J_k$.
		\item $T_{k+1}$ extends $T_k$ in the sense that if $I\in J_{k}$, $I'\in J_{k+1}$ and $I'\subseteq I$ then $T_{k+1}(I')\subseteq T_k(I)$
		\item $||g_k||_\infty\leq 4||h_k||_\infty$
		\item $g_k$ is constant on all the sets $I\in J_k$
		\item $h_k = g_k\circ T_k - g_k$ on $C$
	\end{enumerate}

Now, we suppose that the transformations $T_1,...,T_k$ and functions $g_1,...,g_k$ with given properties have been already defined. For convenience we set $n = |J_k|$ and $m = \frac{|J_{k+1}|}{|J_k|}$.
	Let $I_1,I_2,\dots,I_{n}$ be the sets from $J_k$, enumerated so that $T_k(I_i)=I_{i+1}$ when $i<n$ and $T_k(I_n)=I_1$, which can be done since $T_k$ is a cyclic rearrangement of the sets of $J_k$.
	Furthermore, for $i=1,2,\dots,n$ and $j=1,2,\dots,m$ let us denote by $I_{i,j}$ all sets from $J_{k+1}$ which are contained in $I_i$. Denote by $a_{i,j}$ the value of the function $h_{k+1}$ on $I_{i,j}$. Since 
$$\int_{I_i}h_{k+1}d\lambda=\sum_{j=1}^{m}\int_{I_{i,j}}f_{n_{k+1}} - f_{n_k}d\lambda= 0,\quad \forall I_i\in J_k,$$ 
 it follows that $\sum_{j=1}^m a_{i,j}=0$ for all $i=1,\dots,n$. In addition, $|a_{i,j}|\leq\|h_{k+1}\|_\infty$ for all $i=1,\dots,n,\ j=1,\dots,m$.
	Therefore, by Lemma \ref{Prelim:permutations-kwapien} it follows that there exist such rearrangements $\sigma_1,\dots,\sigma_n$ of the numbers $\{1,\dots,m\}$ that $$|\sum_{i=1}^k a_{i,\sigma_i(j)}|\leq 2\|h_{k+1}\|_\infty$$
	for $k=1,\dots,n$ and $j=1,\dots,m$. Define $T_{k+1}$, by setting 
$$T_{k+1}(I_{i,\sigma_i(j)})=I_{i+1,\sigma_{i+1}(j)},\quad i=1,\dots,n-1,\quad j=1,\dots,m .$$
We set 
$$b_j=\sum_{i=1}^n a_{i,\sigma_i(j)},\quad j=1,\dots,m.$$ Since $\sum_{j=1}^m b_j=\sum_{i=1}^n\sum_{j=1}^m a_{i,j}=0$ and $|b_j|\leq 2\|h_{k+1}\|_\infty$, Lemma \ref{sum} yields the existence of the rearrangement $\sigma_0$ of the numbers $1,\dots,m$ such that $$|\sum_{j=1}^l b_{\sigma_0(j)}|\leq 2\|h_{k+1}\|_\infty,\quad \forall l=1,\dots,m.$$
Set 
$$
T_{k+1}(I_{n,\sigma_n(\sigma_0(j))})=I_{1,\sigma_1(\sigma_0(j+1))},\quad \forall j=1,\dots,m-1$$ and set 
$$
T_{k+1}(I_{n,\sigma_n(\sigma_0(m))})=I_{1,\sigma_1(\sigma_0(1))}.$$ 
Observe that $T_{k+1}$ is a measure preserving transformation mod0 due to Theorem \ref{Prelim:isomorphism} (taking into account that the sets $I_{i,j}$ for $i=1,...,n$ and $j=1,...,m$ are of equal positive measure).
	
	Let us explain in a simpler language what we have just done. The matrix
	$(a_{i,j})_{n\times m}$ is transformed into the matrix $(a_{i,\sigma_i(\sigma_0(j))})_{n\times m}$ in such a way that in every column the module of the sum of all first elements $k$ does not exceed 
	$2\|h_{k+1}\|_\infty$ and the sum of the first $l$ columns does not exceed  $2\|h_{k+1}\|_\infty$. Next, we have build the transformation $T_{k+1}$, which "scans" the matrix column-wise: the first column from top to bottom, then the second column from top to bottom, etc.
	
Hence, 
$$|\sum_{r=0}^l h_{k+1}(T_{k+1}^r(t))|=|\sum_{j=1}^{p-1} b_{\sigma_0(j)}+\sum_{i=1}^q a_{i,\sigma_i(\sigma_0(p))}|\leq 4\|h_{k+1}\|_\infty,$$ 
where $l+1=(p-1)n+q$, for every $t\in I_{1,\sigma_1(\sigma_0(1))}$ and every $l=0,\dots,nm-1$.
	
	Now, let us define the function $g_{k+1}$ by setting its value on
	$T_{k+1}^l(I_{1,\sigma_1(\sigma_0(1))})$ equal to $\sum_{r=0}^{l-1}
	h_{k+1}(T_{k+1}^r(t))$, where $t\in
	I_{1,\sigma_1(\sigma_0(1))}$ for $l=1,...,nm-1$ and setting
	$g_{k+1}(I_{1,\sigma_1(\sigma_0(1))})=0$. Then we have
	$\|g_{k+1}\|_\infty\leq 4\|h_{k+1}\|_\infty$.
	
	Let $t\in I_{1,\sigma_1(\sigma_0(1))}$.
	If
	$0<l<nm-1$, then we have
$$g_{k+1}(T_{k+1}(T_{k+1}^l(t)))-g_{k+1}(T_{k+1}^l(t))=\sum_{r=0}^l
	h_{k+1}(T_{k+1}^r(t))-\sum_{r=0}^{l-1}
	h_{k+1}(T_{k+1}^r(t))=h_{k+1}(T_{k+1}^l(t)),$$ and further
$$g_{k+1}(T_{k+1}(t))-g_{k+1}(t)=h_{k+1}(t)-0=h_{k+1}(t)$$ finally yielding
$$g_{k+1}(T_{k+1}(T_{k+1}^{nm-1}(t)))-g_{k+1}(T_{k+1}^{nm-1}(t))=0 -
	\sum_{r=0}^{nm-2}h_{k+1}(T_{k+1}^r(t))=
	h_{k+1}(T_{k+1}^{nm-1}(t)).$$
Thus, for every $t\in C$ we have 
$$g_{k+1}(T_{k+1}(t))-g_{k+1}(t)=h_{k+1}(t).$$
This completes the construction of the functions 
$\{g_k\}_{k=1}^\infty$ and transformations 
$\{T_k\}_{k=1}^\infty$ with required properties.\\
	
	It follows from the construction that $T_{k+1}$ satisfies the condition $(ii)$. Hence the sequences $T_k$ and $g_k$ satisfy the conditions $(i)-(v)$. Observe that the inverse mappings $T_k^{-1}$ also satisfy the condition $(ii)$.
	
	It follows from the condition $(iii)$ that the series $\sum_{k=0}^\infty g_k$ converges in $L_\infty(C)$ to some function $g$ and $\|g\|_\infty\leq \|g_0\|_\infty + \sum_{k=1}^{\infty}\|g_k\|_\infty \leq \|f\|_\infty + \epsilon||f||_\infty = (1+\epsilon)\|f\|_\infty$.\\
	
	Next, it follows from $(ii)$ that for almost all $t\in C$ the sequence $T_k(t)$ is Cauchy and hence it converges. We then set $T(t)=\lim_{k\rightarrow\infty} T_k(t)\in C$.
	Now, if $x,y\in C$ with $x\not=y$ then there is a $N$ such that  $x\in I$ and $y\in \widetilde{I}$ for some $I,\widetilde{I}\in J_N$ with $I\not= \widetilde{I}$. Hence, $T(x)\in T(I)\subseteq T_k(I)$ and $T(y)\in T(\widetilde{I}) \subseteq T_k(\widetilde{I})$ so that $T(x)\not=T(y)$. Hence, $T$ is injective. 
	
Denote
$$
A_k=\bigcup_{I\in J_k} T_k^{-1}(I),\quad k\ge 1.
$$	
For each $k\ge 1$ the set $A_k$ has full measure in $C$. Hence,
$$
\bigcap_{k=1}^\infty A_k
$$	
also has full measure in $C$.

Now, let $\omega\in \bigcap_{k=1}^\infty A_k$, then, for $k\geq 1$ we can find $I_\omega^k\in J_k$ with $\omega\in I_\omega^k$. Therefore, for all such $\omega$, we have
	$\bigcap_{k=1}^\infty T_k^{-1}(I_{\omega}^{k})$ is non-empty. 
Hence, we can find $x\in\bigcap_{k=1}^\infty T_k^{-1}(I_{\omega}^{k})\subseteq C$ so that $T_k(x)\in I_\omega^k$ for all $k\geq 1$. 
Now, as $\Diam(I_\omega^k)\to 0$ (see assertion (7) in Lemma \ref{Lemma:contruction-of-chains}) we must have $T(x) = \omega$. This means that $T$ is a bijection between two subsets of the set $C$ of full measure: one is the set $ \bigcap_{k=1}^\infty A_k$ and the other is its image under the mapping $T$.
	
Let us verify that $T$ is measure preserving. Fix $k\geq 1$, then for $I\in J_k$ we have $T(I) = \widetilde{I}$ for some $\widetilde{I}\in J_k$. Hence, 
$$\lambda(T(I)) = \lambda(\widetilde{I}) = \frac{\lambda(C)}{|J_k|} = \lambda(I),\quad \forall I\in J_k.$$ Now as $\bigcup_{n=1}^\infty J_k$ generates the Borel $\sigma$-algebra on $C$ (see assertion (8) in Lemma \ref{Lemma:contruction-of-chains}), this equality holds for all sets in $\mathcal{B}(C)$. Thus $T$ is a measure preserving transformation mod0 of $C$.
	Now, we have for $k\geq 1$ that 
$$g_k(T(x)) - g_k(x) = g_k(T_k(x)) - g_k(x) = h_k(x).$$
	Hence, 
$$g(T(x)) - g(x) = \sum_{k=0}^\infty g_k(T(x)) - g_k(x) = f_{n_0} + \sum_{k=1}^{\infty}h_k = f.$$ 
This completes the proof of Lemma \ref{Lemma:solving-equation-on-subset}.
\end{proof}

\subsection{Completing the proof of Theorem \ref{TheoremNowhereConstant}}
We are now in a position to complete the proof of Theorem \ref{TheoremNowhereConstant}.
Let $K\subseteq [0,1]$ and $f$ satisfying the assumptions of Theorem \ref{TheoremNowhereConstant} be given. Further choose $\epsilon>0$. We will define a measure preserving transformation mod0 $T$ of $K$ and a function $g\in L_\infty(K)$ with $||g||_\infty\leq (1+\epsilon)||f||_\infty$ such that $f = g\circ T- g$.
We will do this by considering a family of measurable subsets $\{A_i\}_{j\in \mathcal{J}}$ of $K$, where $\mathcal{J}$ is some index set, so that the following holds:
\begin{enumerate}
	\item For different $i,j\in \mathcal{J}$ we have that $A_i$, $A_j$ are disjoint.
	\item For $j\in \mathcal{J}$ we have $\int_{A_j}fd\lambda =0$ and $\lambda(A_j)>0$.
	\item For $j\in \mathcal{J}$ we have that $f|_{A_j}$ is continuous.
	\item For $j\in \mathcal{J}$ we can find a function $g_j\in L_\infty(A_j)$ with $||g_j||_\infty \leq (1+\epsilon)||f||_\infty$ and a measure preserving transformation mod0 $T_j$ of $A_j$ such that $f|_{A_j} = g_j\circ T_j -g_j$.
\end{enumerate}
We can equip the set of of such selections $\{A_j\}_{j\in \mathcal{J}}$ with the partial ordering $\subseteq$ of being a subset. Now, suppose that we have a chain 
$\{\{A_i\}_{i\in \mathcal{J}_i}\}_{i\in I}$ for some index set $I$, then if we let $\mathcal{J} = \bigcup_{i\in I}\mathcal{J}_i$, we obtain an upper bound
$\{A_j\}_{j\in \mathcal{J}}$ for the chain. Hence, every chain has an upper bound, so that we can apply Zorn's Lemma. %Lemma \ref{Prelim:zorn}.
Now, we can choose a selection
$\{A_j\}_{j\in \mathcal{J}}$ that is maximal. Now, let $D =K\setminus\bigcup_{j\in \mathcal{J}}A_j$ and suppose
$\lambda(D)>0$. We have $\int_{D}fd\lambda = \int_{K}fd\lambda  - \sum_{j\in\mathcal{J}}\int_{A_j}fd\lambda = 0$. We set $\tau^\pm = \lambda(\{f|_{D}^\pm\geq \frac{1}{2}||f|_{D}^\pm||\})$ and set
$z = (1 + 2||f|_{D}||_\infty\max\{\frac{1}{||f|_{D}^+ ||_\infty},\frac{1}{||f|_{D}^-||_\infty}\})$.
We can now choose $\epsilon_1>0$  with $\epsilon_1<\min\{\frac{\lambda(D)}{z},\tau^+\frac{||f^+|_{D}||_\infty}{||f|_{D}||_\infty},\tau^-\frac{||f^-|_{D}||_\infty}{||f|_{D}||_\infty}\}$.
Now, we can apply Lusin's theorem, Theorem \ref{Prelim:lusin} on $D$, to get a compact set 
$E\subseteq D$ of measure $\lambda(E)>\lambda(D) - \epsilon_1$  so that $f$ is continuous on $E$. Now, by the bound on $\epsilon_1$ we can then apply Lemma \ref{Lemma:compensation-of-integral} on $E\subseteq D$ with $f|_{D}$ and $\epsilon_1$ to select a compact subset $K\subseteq E$ of measure $\lambda(K)>\lambda(D) - z\epsilon_1> 0$  such that $\int_{K}fd\lambda = 0$. 
Applying Lemma \ref{Lemma:contruction-of-chains}, together with Lemma \ref{Lemma:solving-equation-on-subset}, to $K$, $f$ and $\min\{\epsilon_1,\frac{1}{2}\lambda(K)\}$ we obtain a compact subset $C\subseteq K$ with 
$\lambda(C)\geq \lambda(K) - \frac{1}{2}\lambda(K)>0$ and such that $\int_{C}fd\lambda = 0$, a function $g\in L_\infty(C)$ with 
$\|g\|_\infty \leq (1+\epsilon)||f||_\infty$ and a measure preserving transformation mod0 $T$ of $C$ such that $f|_{C} = g\circ T -g$. We now see that $\{C\}\cup \{A_j\}_{j\in \mathcal{J}}$ satisfies properties (1)-(4) above so that $\{A_j\}_{j\in \mathcal{J}}$ is not maximal, which is a contradiction. We conclude that $\lambda(D) = 0$. \\

\sloppy Having established that $K\setminus\bigcup_{j\in \mathcal{J}}A_j$ has measure zero, we can define the final transformation $T$ of $K$ as $T|_{A_j} = T_j$ and $T(x) =x$ for $x\in K\setminus\bigcup_{j\in \mathcal{J}}^\infty A_j$, and likewise define the function $g$ as $g|_{A_j} = g_j$. Then $T$ is a measure preserving transformation mod0 of $K$ and $g$ is a function $g\in L_\infty(K)$ with $||g||_\infty = \sup\{||g_j||:j\in \mathcal{J}\} \leq (1+\epsilon)||f||_\infty$ such that $f = g\circ T - g$, which completes the proof of Theorem \ref{TheoremNowhereConstant}.\\

\section{Kwapie\'{n} Theorem for elementary functions} \label{Section:ProofCountablyValuedFunctions}
In this section we prove the Theorem \ref{Main result} for mean zero functions taking only countably many values. More precisely, we establish the following result.
\begin{Thm}\label{TheoremCountablyValued} Let $K\subseteq [0,1]$ be measurable. Let $f\in L_\infty(K)$ be a mean zero real-valued function taking at most countably many values. Then there exists some measure preserving transformation mod0 $T$ of $K$ and a function $g\in L_\infty(K)$ with $||g||_\infty\leq ||f||_\infty$, such that $f = g\circ T - g$.	 
\end{Thm}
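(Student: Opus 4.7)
My plan is to prove Theorem \ref{TheoremCountablyValued} via a Zornification / exhaustion scheme analogous to Section \ref{Section:ProofNowhereConstantFunctions}, coupled with a combinatorial ``building block'' construction on which Lemma \ref{sum} applies directly to produce the sharp bound $\|g\|_\infty \le \|f\|_\infty$. The tighter constant (as compared with the $(1+\epsilon)\|f\|_\infty$ of Theorem \ref{TheoremNowhereConstant}) should be achievable because on each building block a single cyclic permutation of finitely many equal-measure cells suffices, bypassing the infinite tower of Lemma \ref{Lemma:solving-equation-on-subset} and its $\epsilon$-loss.

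First I would enumerate the distinct values $\{v_i\}$ of $f$ with $A_i = f^{-1}(\{v_i\})$ (discarding null level sets), so $\sum_i v_i \lambda(A_i) = 0$, and consider the partially ordered set of families $\{D_j\}_{j \in \mathcal{J}}$ of pairwise disjoint positive-measure measurable subsets of $K$ with $\int_{D_j} f\, d\lambda = 0$ carrying $T_j, g_j$ satisfying $f|_{D_j} = g_j \circ T_j - g_j$ and $\|g_j\|_\infty \le \|f\|_\infty$. By Zorn's lemma there is a maximal such family, and it suffices to show $D := K \setminus \bigcup_j D_j$ is null. If $\lambda(D) > 0$ then $\int_D f\, d\lambda = 0$ and $f|_D$ takes values of both signs, so one can pick $v_+ > 0 > v_-$ with $\lambda(D \cap A_\pm) > 0$; it is then enough to exhibit a positive-measure building block $D^* \subseteq D$ on which the required $T^*, g^*$ can be constructed, contradicting maximality.

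For the building block I would pick finitely many values $v_{i_1}, \ldots, v_{i_k}$ of $f$ occurring on $D$ together with positive integers $n_1, \ldots, n_k$ satisfying $\sum_\ell n_\ell v_{i_\ell} = 0$, and use Theorem \ref{Prelim:isomorphism} to select measurable subsets $D^*_\ell \subseteq D \cap A_{i_\ell}$ of measure $n_\ell c$ for a common small scalar $c > 0$, so that $D^* := \bigcup_\ell D^*_\ell$ has $\int_{D^*} f\, d\lambda = (\sum_\ell n_\ell v_{i_\ell}) c = 0$. Partitioning $D^*$ into $M := \sum_\ell n_\ell$ equal-measure cells (via Theorem \ref{Prelim:isomorphism}, with $n_\ell$ cells inside each $D^*_\ell$) and applying Lemma \ref{sum} to the corresponding list of $M$ values---each $v_{i_\ell}$ appearing $n_\ell$ times, total sum $0$, each bounded by $\|f\|_\infty$---yields a cyclic permutation $\sigma$ whose partial sums are bounded by $\max_\ell |v_{i_\ell}| \le \|f\|_\infty$. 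The measure preserving transformation mod$0$ $T^*$ of $D^*$ implementing $\sigma$ cell-by-cell (again via Theorem \ref{Prelim:isomorphism}) and the piecewise-constant $g^*$ given by the partial sums then satisfy $\|g^*\|_\infty \le \|f\|_\infty$ and $f|_{D^*} = g^* \circ T^* - g^*$.

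The main obstacle is producing the positive-integer relation $\sum_\ell n_\ell v_{i_\ell} = 0$ when the values of $f$ are $\QQ$-linearly independent, in which case no finite sub-family admits a rational relation at all; in the worst case this reduces to a two-value mean zero function on a subset $D^*_+ \cup D^*_-$ of $D$ with irrational measure ratio $-v_-/v_+$, which cannot be handled by the cyclic permutation of equal-measure cells. To resolve this residual case I would use a direct Rokhlin-tower construction over $D^*_+$ (chosen via Theorem \ref{Prelim:isomorphism} so that $v_+ \lambda(D^*_+) + v_- \lambda(D^*_-) = 0$) with variable return-time function $\tau \in \{n, n+1\}$ averaging $\lambda(D^*)/\lambda(D^*_+) = 1 + v_+/|v_-|$; a direct computation of partial sums of $f$ along $T^*$-orbits shows they lie in an interval of width $n|v_-|$, and the mean zero constraint $v_+ = (n-1+\eta)|v_-|$ (with $\eta$ the fractional part of $\lambda(D^*)/\lambda(D^*_+)$) yields $n|v_-|/2 \le \max\{v_+,|v_-|\} = \|f\|_\infty$, giving $\|g^*\|_\infty \le \|f\|_\infty$ after centering $g^*$. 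This completes the building-block construction in all cases, contradicts maximality of the Zorn family, and patching the $T_j, g_j$ together over the now-null complement yields the theorem.
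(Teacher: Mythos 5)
Your overall skeleton (Zorn exhaustion by positive-measure mean zero blocks, reduction to finitely-valued building blocks, sharp constant from a single cyclic arrangement) is parallel to the paper's, but the proof has a genuine gap exactly at its crux, namely the residual two-valued case with irrational measure ratio. Your Rokhlin-tower sketch over $D^*_+$ with return times $\tau\in\{n,n+1\}$ does not specify \emph{in which order} the two return times occur along an orbit, and this is not a detail: each completed block contributes $\eta|v_-|$ or $-(1-\eta)|v_-|$ to the partial sums (occurring with frequencies $1-\eta$ and $\eta$), so keeping the partial sums of the block sums bounded is \emph{again} an instance of the two-valued mean zero coboundary problem with (generically) irrational ratio $\eta/(1-\eta)$ --- the argument regresses rather than terminates. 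Your asserted width $n|v_-|$ for the orbital partial sums is likewise unproved and depends entirely on that arrangement; with a bad arrangement (say, many consecutive $n$-blocks) the partial sums drift without bound. The paper cuts this knot with an explicit closed form: for $f_a=(1-a)\chi_{[0,a)}-a\chi_{[a,1)}$ take $T(t)=\{t-a\}$ and $g(t)=t-\tfrac12$, so that $g\circ T-g=f_a$ and $\|g\|_\infty=\tfrac12\le\max(a,1-a)=\|f_a\|_\infty$ for every $a$, rational or not; your tower, made precise, is just the return-time representation of this rotation, so you should simply write the rotation down.

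A secondary remark: the integer-relation machinery ($\sum_\ell n_\ell v_{i_\ell}=0$, equal-measure cells, Lemma \ref{sum}) is dead weight. You are free to choose the \emph{measures} of the pieces, so any single positive value $\alpha_{i_1}$ and single negative value $\alpha_{i_2}$ each attained on positive measure can be balanced by shrinking one of the two level sets until $\alpha_{i_1}\lambda(B^+)+\alpha_{i_2}\lambda(B^-)=0$; the two-valued case (handled once and for all by the rotation plus the measure isomorphism Theorem \ref{Prelim:isomorphism}) then covers every building block uniformly, and the Zorn argument on such \emph{pairs} finishes the proof. This is precisely the paper's route, and it renders your distinction between the ``rational-relation'' and ``$\QQ$-linearly independent'' cases unnecessary.
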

\begin{proof}
	We will prove this theorem by considering several cases, increasing the level of generality
	
	1. Let $a\in[0,1]$ and $f_a=(1-a)\chi_{[0,a)}-a\chi_{[a,1)}$. Clearly, $\int_0^1 f_a d\lambda=0$ and $\|f_a\|_\infty=\max(a,1-a)\geq 1/2$.
	Set $g(t)=t-1/2,\ T(t)=\{t-a\}$ (by $\{t\}$ we denote the fractional part of $t$, that is the distance from $t$ to the closest integer which does not exceed $t$). Then $T$ is measure preserving, $g(T(t))-g(t)=f_a(t)$ for all $t\in[0,1)$ and $\|g\|_\infty=1/2\leq \|f_a\|_\infty$.
	
	2. Let now $a\in[0,1]$, $f=\alpha\chi_{[0,a)}+\beta\chi_{[a,1)}$ and $\int_0^1 f d\lambda=0$. Then $\alpha a+\beta(1-a)=0$ and $f=\frac{\alpha}{1-a}f_a$. Therefore, this case can be reduced to the preceding.
	
	3. Let $a,b\in[0,1],\ a<b$, $f=\alpha\chi_{[0,a)}+\beta\chi_{[a,b]}$ and $\int_0^1 f d\lambda=0$. This case can be reduced to the preceding as follows.
	We define $\widetilde{f}(t) = f(bt)$
	so that $\widetilde{f} = \alpha \chi_{[0,\frac{a}{b})} + \beta\chi_{[\frac{a}{b},1]}$ and $\int_{0}^1\widetilde{f}d\lambda =0$.
	Hence we find $\widetilde{g}$ with $||\widetilde{g}||_\infty\leq ||\widetilde{f}||_\infty$ and measure preserving transformation $\widetilde{T}$ of $[0,1]$ such that $\widetilde{f} = \widetilde{g}\circ \widetilde{T} -\widetilde{g}$. Now define $g(t) = \widetilde{g}(\frac{t}{b})$ and $T(t) = b\widetilde{T}(\frac{t}{b})$ for $t\leq b$ and $g(t) = 0$ and $T(t) = t$ for $t>b$. We then find for $t\leq b$ that $f(t) = \widetilde{f}(\frac{t}{b}) = \widetilde{g}(\widetilde{T}(\frac{t}{b})) - \widetilde{g}(\frac{t}{b}) = g(T(t)) - g(t)$ and for $t>b$ we find $f(t) = 0 =g(T(t)) - g(t)$. Moreover, we have $||g||_\infty = ||\widetilde{g}||_\infty \leq ||\widetilde{f}||_\infty = ||f||_\infty$
	
	4. Let $A$, $B$ be disjoint measurable sets. Let $f = \alpha\chi_A +\beta\chi_{B} \in L_\infty[0,1]$ be mean zero.  We set $C = [0,1]\setminus (A\cup B)$. By Theorem \ref{Prelim:isomorphism} there exists a measure preserving transformation mod0 $S$ of $[0,1]$ such that 
$$
S(A) = [0,\lambda(A)),\quad S(B) = [\lambda(A),\lambda(A) + \lambda(B)],\quad S(C) = (\lambda(A) + \lambda(B),1].
$$ 
Letting $\widetilde{f} = f\circ S^{-1}$, we obtain  
$$\widetilde{f} = \alpha\chi_{[0,\lambda(A))} + \beta\chi_{[\lambda(A),\lambda(A)+\lambda(B)]}.
$$ 
Hence, appealing to the case, we find a function $\widetilde{g}\in L_\infty[0,1]$ with $\|\widetilde{g}\|_\infty \leq \|\widetilde{f}\|_\infty$ and a measure preserving transformation $\widetilde{T}$ of $[0,1]$ such that $\widetilde{f} = \widetilde{g}\circ\widetilde{T} - \widetilde{g}$. Furthermore, $\widetilde{T}$ is the identity on $[\lambda(A)+\lambda(B),1]$.
	Now define $T = S^{-1}\circ \widetilde{T} \circ S$ which is a measure preserving transformation mod0 of $[0,1]$ and define $g = \widetilde{g}\circ S\in L_\infty[0,1]$. We have $f = \widetilde{f}\circ S = g\circ T - g$.
	Moreover we have $\|g\|_\infty = \|\widetilde{g}\|_\infty \leq \|\widetilde{f}\|_\infty = \|f\|_\infty$.
Further noting that $T$ is the identity on $[0,1]\setminus (A\cup B)$ we can also consider $T$ as a measure preserving transformation mod0 of $A\cup B$. Hence $f|_{A\cup B} = g|_{A\cup B}\circ T|_{A\cup B} - g|_{A\cup B}$.
	
	5. Let $K\subseteq [0,1]$ be measurable. Let $f\in L_\infty(K)$ be mean zero and taking at most countably many values. Without loss of generality, we write $f = \sum_{i=1}^{\infty}\alpha_i\chi_{A_i}$ for some scalars $\alpha_i\in \RR\setminus\{0\}$ and some pairwise disjoint measurable sets $A_i\subseteq K$.
	
Let $\{B_j^+,B_j^-\}_{j\in \mathcal{J}}$ be a collection of pairs with $\mathcal{J}$ being an index set, such that all sets $B_j^+,B_j^-$ are disjoint, of positive measure and such that for $j\in \mathcal{J}$ there exists $i_1,i_2\in \NN$ with $B_j^+\subseteq A_{i_1}$ and $B_j^-\subseteq A_{i_2}$ and such that $\alpha_{i_1}\lambda(B_j^+) + \alpha_{i_2}\lambda(B_j^-) = 0$. Now, consider the set of all such collections equipped with the partial ordering $\subseteq$ given by inclusion.
	Now, suppose we have some chain $\{\{B_j^+,B_j^-\}_{j\in \mathcal{J}_i}\}_{i\in \mathcal{I}}$, where $\mathcal{I}$ is some index set, then if we set $\mathcal{J} = \bigcup_{i\in \mathcal{I}} \mathcal{J}_i$ we find a upper bound $\{B_j^+,B_j^-\}_{j\in \mathcal{J}}$ for the chain. 
An appeal to Zorn's Lemma yields a maximal element in the set of collections,
	$\{B_j^+,B_j^-\}_{j\in \mathcal{J}}$ for some countable set $\mathcal{J}$.
Let us set 
$$Z = \bigcup_{j\in\mathcal{J}}(B_j^+\cup B_j^-)
$$ and suppose that 
$$\lambda(supp f\setminus Z)\not=0.
$$ 
Taking into account that 
$\int_{B_j^+\cup B_j^-}fd\lambda = 0$ for all $j\in \mathcal{J}$, we infer that $\int_{supp f\setminus Z}fd\lambda = 0$ and, hence, we can select 
$i_1,i_2\in \NN$ and sets $B^+\subseteq A_{i_1}\cap (supp f\setminus Z)$ and $B^-\subseteq A_{i_2}\cap (supp f\setminus Z)$ of positive measure such that $B^+,B^-$ are disjoint with all sets $B_j^+, B_j^-$ for $j\in \mathcal{J}$, and such that $\alpha_{i_1}> 0 > \alpha_{i_2}$.
	This means that we can find $B'^+\subseteq B^+$ and $B'^-\subseteq B^-$ of positive measure such that $\alpha_{i_1}\lambda(B'^+) + \alpha_{i_2}\lambda(B'^-)=\int_{B'^+\cup B'^-}fd\lambda =0$. Now this means that the selection$\{B_j^+,B_j^-\}_{j\in \mathcal{J}}$  is not maximal, which is a contradiction. Hence we conclude that 
$$\lambda(supp f\setminus Z) = 0.
$$
Thus, for each $j\in \mathcal{J}$ we can, by referring to the preceding case, find a $g_j\in L_\infty(B_j^+\cup B_j^-)$ with $\|g_j\|_\infty \leq \|f\|_\infty$ and a measure preserving transformation mod0 $T_j$ of $B_j^+\cup B_j^-$ such that $f|_{B_j^+\cup B_j^-} = g_j\circ T_j -g_j$ on $B_j^+\cup B_j^-$. Hence, defining $T$ and $g$ as  $T_j$ and $g_j$ respectively on $B_j^+\cup B_j^-$ for $j\in \mathcal{J}$ and setting $T(x) = x$ and $g(x)=0$ for $x$ in the null set $K\setminus Z$ yields the measure preserving transformation mod0 $T$ of $K$ and the function $g\in L_\infty(K)$ satisfying
	$f = g\circ T - g$. Moreover $\|g\|_\infty\leq \sup_{j\in \mathcal{J}}\|g_j\|_\infty \leq \|f\|_\infty$. The proof of Theorem \ref{TheoremCountablyValued} is completed.
\end{proof}

\section{Completing the proof of Theorem \ref{Main result}}
\label{Section:ProofFullTheorem}
We are now in a position to complete the proof of  Kwapie\'{n}'s Theorem \ref{Main result}. For convenience, we restate it below.
\begin{Thm}
	Let $f\in L_\infty[0,1]$ be a real-valued mean zero function. For any $\epsilon>0$ there exists a measure preserving transformation mod0 $T$ of $[0,1]$ and a function $g\in L_\infty[0,1]$ with
	$\|g\|_\infty \leq (1+\epsilon)\|f\|_\infty$ so that 
	$f = g\circ T - g$.
	\end{Thm}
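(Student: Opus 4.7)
The plan is to adapt the Zorn's Lemma argument used at the end of Section 3 so that each remaining region of $f$ can be extracted either as an atomless chunk handled by Theorem \ref{TheoremNowhereConstant} or as a two-atom chunk handled by Theorem \ref{TheoremCountablyValued}. Specifically, consider the collection $\mathcal{F}$ of families $\{(A_j, g_j, T_j)\}_{j\in\mathcal{J}}$ where the $A_j\subseteq[0,1]$ are pairwise disjoint measurable sets of positive measure, each $T_j$ is a measure preserving transformation mod0 of $A_j$, each $g_j\in L_\infty(A_j)$ satisfies $\|g_j\|_\infty\leq(1+\epsilon)\|f\|_\infty$, and $f|_{A_j}=g_j\circ T_j - g_j$ a.e.; note $\int_{A_j} f\,d\lambda=0$ automatically follows since $g_j\circ T_j$ has the same integral as $g_j$. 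Partially order $\mathcal{F}$ by inclusion; every chain has an upper bound via the union of index sets, so Zorn's Lemma yields a maximal element.

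The key claim is that $D:=[0,1]\setminus\bigcup_j A_j$ is null. Supposing $\lambda(D)>0$, we have $\int_D f\,d\lambda=0$ by mean-zero of $f$ and of each $f|_{A_j}$. I will produce a measurable $B\subseteq D$ of positive measure with $\int_B f\,d\lambda=0$ on which either Theorem \ref{TheoremNowhereConstant} or Theorem \ref{TheoremCountablyValued} applies, contradicting maximality. The argument splits by the structure of atoms of $f|_D$, i.e.\ values $y$ with $\lambda(f^{-1}\{y\}\cap D)>0$, of which there are at most countably many.

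If $f|_D$ has no atoms, assumption (ii) of Theorem \ref{TheoremNowhereConstant} holds trivially on $D$ with any $\kappa>\|f\|_\infty$, so we apply that theorem directly to $B=D$. If $f|_D$ has atoms of strictly opposite signs, choose $y_i>0>y_{i'}$ with atom sets $E_i,E_{i'}\subseteq D$ and positive-measure subsets $B_i\subseteq E_i$, $B_{i'}\subseteq E_{i'}$ with $y_i\lambda(B_i)+y_{i'}\lambda(B_{i'})=0$; on $B=B_i\cup B_{i'}$ the restriction $f|_B$ takes only two values and is mean zero, and Theorem \ref{TheoremCountablyValued} yields the desired data with $\|g\|_\infty\leq\|f\|_\infty$. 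Otherwise all atoms share the same sign, WLOG all nonnegative with at least one $y_1>0$ (the all-zero subcase reduces to the atomless case on $D\setminus f^{-1}\{0\}$; the all-negative case is handled by replacing $f$ with $-f$). Since $\int_D f=0$ and atoms contribute nonnegatively, the atomless region $N=D\setminus\bigcup_i E_i$ satisfies $\int_N f\leq -y_1\lambda(E_1)<0$, so $\{f<0\}\cap N$ has positive measure. An intermediate-value argument over nested subsets of $\{f<0\}\cap N$ produces, for all sufficiently small $r>0$, a subset $E_1'\subseteq E_1$ with $\lambda(E_1')=r$ and a positive-measure $N_1\subseteq\{f<0\}\cap N$ with $y_1 r+\int_{N_1}f\,d\lambda=0$. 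On $B:=E_1'\cup N_1$ the function $f|_B$ is mean zero, has no atoms below $y_1$ (since $N$ has none of $f$), and satisfies $f|_{E_1'}=y_1$ on $\{f|_B\geq y_1\}=E_1'$; hence assumption (ii) holds with $\kappa=y_1$ and Theorem \ref{TheoremNowhereConstant} applies.

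Having forced $\lambda(D)=0$, define $T|_{A_j}=T_j$ and $T=\mathrm{id}$ on $D$, and set $g|_{A_j}=g_j$, $g=0$ on $D$. Then $T$ is a measure preserving transformation mod0 of $[0,1]$, $g\in L_\infty[0,1]$ with $\|g\|_\infty=\sup_j\|g_j\|_\infty\leq(1+\epsilon)\|f\|_\infty$, and $f=g\circ T-g$ a.e. The main obstacle is the one-sided-atom case: extracting $B$ while simultaneously maintaining mean zero, keeping the atom at the top so condition (ii) is met, and ensuring positivity of measure — all feasible by shrinking $E_1$ enough that the required negative integral is attainable within $\{f<0\}\cap N$.
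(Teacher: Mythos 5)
Your proposal is correct, and it reorganizes the argument in a genuinely different way from the paper while using the same two building blocks (Theorems \ref{TheoremNowhereConstant} and \ref{TheoremCountablyValued}) and the same key observation that a single positive atom paired with negative atomless mass satisfies assumption (ii) with $\kappa$ equal to the atom's value. The paper builds one explicit global partition of $[0,1]$: it peels off a sign-balanced portion $C$ of the atomic set $D=f^{-1}(D')$ on which $f$ is countably valued (handled by Theorem \ref{TheoremCountablyValued}), then a sign-balanced atomless piece $B_0$ (Theorem \ref{TheoremNowhereConstant} with a vacuous $\kappa$), and finally pairs each remaining positive atom set $A_i$ with a successive slice $B_i=[r_{i-1},r_i]\cap\widetilde{C}_2$ of the negative atomless remainder so that $E_i=A_i\cup B_i$ is mean zero and satisfies (ii) with $\kappa=y_i$. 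You instead run a Zorn's lemma exhaustion and extract one mean-zero piece at a time via a three-way case analysis; your three local pieces (atomless, two atoms of opposite sign, one positive atom plus negative atomless mass) are exactly the paper's three global constituents, so the two routes prove the same thing, with yours trading the explicit recursive bookkeeping of the $r_i$ for a maximality argument. Two small points to patch: in the all-atoms-zero subcase your reduction to the atomless case on $D\setminus f^{-1}(\{0\})$ breaks down when that set is null, but then $f|_D=0$ a.e.\ and $g=0$, $T=\mathrm{id}$ handles $D$ directly; and you should record that the index set $\mathcal{J}$ is countable (the $A_j$ are disjoint and of positive measure), which is what justifies both $\int_D f\,d\lambda=0$ and the a.e.\ well-definedness of the glued $g$ and $T$.
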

	\begin{proof}
		Let $f\in L_\infty[0,1]$ be mean zero. We will partition the interval $[0,1]$ into certain subsets on which $f$ is mean zero. We do this as follows.
		We let 
$$D' = \{y\in \RR: \lambda(f^{-1}(\{y\})) >0 \}$$ and set 
$D = f^{-1}(D')$. The function $f$ takes only countably many values on $D$, since every value on $D$ is taken on a set of positive measure. We set 
$$D^\pm = \{f^\pm \geq 0\}\cap D.
$$
We assume $\int_{D}fd\lambda \geq 0$, the case that $\int_{D}fd\lambda <0$ then follows by considering $-f$. 
We now set 
$$
C = (D^+\cap [0,R]) \cup D^-
$$ 
for some 
$0\leq R\leq 1$ such that $\int_{C}fd\lambda = 0$.
Further, we consider the sets 

$$
C' = [0,1]\setminus C,\quad C_1 = C'\cap D, 
\quad C_2 = C'\setminus D.
$$ 

As $C_1\subseteq D\setminus D^-$ we have 
$f|_{C_1}>0$ and hence $\int_{C_1}fd\lambda \geq 0$. Further, as $C_2\subseteq [0,1]\setminus D$ we have 
$\lambda(f|_{C_2}^{-1}(\{y\})) = 0$ for all $y\in \RR$.
As $f$ is mean zero on $[0,1]$ and on $C$, we have
$$\int_{C_1}fd\lambda + \int_{C_2}fd\lambda = \int_{C'}fd\lambda = -\int_{C}fd\lambda = 0.
$$
Hence, $\int_{C_2}fd\lambda\leq 0$.
We further denote 
$$
C_2^\pm = \{f^\pm \geq 0\} \cap C_2
$$
and define 
$$B_0 = C_2^+ \cup (C_2^- \cap [0,R])$$ for some $0\leq R\leq 1$ so that 
$$\int_{B_0}fd\lambda = 0.$$

Now, at last, we let 
 $$\widetilde{C}_2 = C_2\setminus B_0.$$ 
 As $\widetilde{C}_2\subseteq C_2^-\setminus C_2^+$ we have that $f|_{\widetilde{C}_2} <0$.
		We further have 
		$$\int_{\widetilde{C}_2}fd\lambda = \int_{C_2}fd\lambda = -\int_{C_1}fd\lambda.$$
		
We let $(y_i)_{i\geq 1}$ be an (either finite or infinite) enumeration of $f(C_1)$ and we set 
$$A_i = f^{-1}(\{y_i\})\cap C_1.
$$ 
Further, we let $r_0 = 0$. 
Now, as 
$$\int_{C_1}fd\lambda + \int_{\widetilde{C}_2}fd\lambda = 0
$$ 
we can recursively choose $r_i$ for $i\geq 1$ such that 
$r_i\geq r_{i-1}$ and 
$$\int_{[r_{i-1},r_{i}]\cap \widetilde{C}_2}fd\lambda + \int_{A_i}fd\lambda = 0.
$$
We then set 
$$B_i = [r_{i-1},r_{i}]\cap \widetilde{C}_2.
$$
We have now partitioned $[0,1]$ into the sets $C, B_0,$ and  $A_i\cup B_i$ for $i\geq 1$ (or, for possibly finitely many $i$). On each of these sets $f$ is mean zero. Now, as $f$ takes only countably many values on $C$, we can use Theorem \ref{TheoremCountablyValued} to get a measure preserving transformation mod0 $T_C$ of $C$ and a function $g_C\in L_\infty(C)$ with $\|g_C\|_\infty\leq \|f||_\infty$ so that $f|_C = g_C\circ T_C - g_C$.

On $E_0:=B_0$ we have that $\lambda(f|_{E_0}^{-1}(\{y\})) = 0$ for all $y\in \RR$, hence we can use Theorem \ref{TheoremNowhereConstant}  to obtain a measure preserving transformation mod0, $T_{E_0}$ of $E_0$, and a function $g_{E_0}\in
		L_\infty(E_0)$ with $||g_{E_0}||_\infty\leq (1+\epsilon)||f||_\infty$ so that $f|_{E_0} = g_{E_0}\circ T_{E_0} - g_{E_0}$. Last, we use Theorem \ref{TheoremNowhereConstant} on $E_i := A_i\cup B_i$ for $i\geq 1$ with $\kappa = y_i$ to obtain a measure preserving transformation mod0 $T_{E_i}$ of $E_i$ and a function $g_{E_i}\in L_\infty(E_i)$ with $||g_{E_i}||_\infty\leq (1+\epsilon)||f||_\infty$ so that $f|_{E_i} = g_{E_i}\circ T_{E_i} - g_{E_i}$.
		
Finally, we define the measure preserving transformation mod0 $T$ of $[0,1]$  by setting $T|_C = T_C$ and $T|_{E_i} = T_{E_i}$
		for $i\geq 0$, and on the remaining null set we define $T$ as the identity. Likewise, we define the function $g\in L_\infty[0,1]$ by setting $g|_{C} = g_C$, and $g|_{E_i} = g_{E_i}$ for $i\geq 0$. We then have $f = g\circ T -g$ as well as  the bound $\|g\|_\infty\leq \sup\{\|g_C\|_\infty\}\cup \{\|g_{E_i}\|_\infty: i\geq 1\}\leq (1+\epsilon)\|f\|_\infty$. This completes the proof.
	\end{proof}

\section*{Acknowledgements}
Some part of this research was done by the second named author (M.B.) for his Bachelor of Science thesis. He thanks his supervisor Mark Veraar for giving him helpful insights and feedback during the research. The third named author (F.S.) thanks Professors Kwapie\'{n} and Bogachev for useful discussions concerning \cite{Kwapien}. \\

Department of Mathematics, \\National University of Uzbekistan,\\
	Vuzgorodok, 100174,\\ Tashkent, Uzbekistan\\
\email{ber@ucd.uz}\\

Delft Institute of Applied Mathematics, \\Delft University of Technology,\\
	P.O. Box 5031,
	2600 GA,\\ Delft, The Netherlands\\
\email{m.j.borst@student.tudelft.nl}\\
\email{m.j.borst@outlook.com}\\

School of Mathematics and Statistics,\\ University of New South Wales,\\ Kensington, NSW 2052, Australia\\
\email{f.sukochev@unsw.edu.au}\\

\end{document}